\newcommand{\concat}{*}
\newcommand{\qred}{q_{\mathrm{red}}}
\newcommand{\qint}{q_{\mathrm{int}}}
\newcommand{\qmax}{q_{\mathrm{max}}}
\newcommand{\pseud}{\ell_{\mathrm{p}}}
\DeclareMathOperator{\denom}{denom}
\begin{document}
\title[Intrinsic approximation for fractals]{Intrinsic approximation for fractals defined by rational iterated function systems - Mahler's research suggestion}

\authorlior\authordavid

\subjclass[2010]{Primary 11J04, 11J83}

\maketitle

\begin{abstract}

In this paper, we consider intrinsic Diophantine approximation in the sense of K. Mahler (1984) on the Cantor set and similar fractals. We begin by obtaining a Dirichlet type theorem for the limit set of a rational iterated function system. Next, we investigate the rigidity of this result by applying a random affine transformation to such a fractal and determining the intrinsic Diophantine theory of the image fractal. The final two sections concern the optimality of the Dirichlet type theorem established at the beginning. The first of these seeks to show optimality in the sense that any proof using the same method as ours cannot prove a better approximation exponent, in a precise sense. This is done by introducing a new height function on the rationals intrinsic to the fractal and studying the Diophantine properties of points on the fractal with respect to this new height function. In the final section, we use a result of S. Ramanujan to give a lower bound on the periods of rationals which could cause exceptions to the optimality of the approximation exponent (this time with the usual height function). We give a heuristic argument suggesting that there are only finitely many rationals with periods so large; if this is true, then the approximation exponent is optimal for the Cantor set.

\end{abstract}

\section{Introduction}

In 1984, K. Mahler published a paper entitled ``Some suggestions for further research'' \cite{Mahler}, in which he writes the following moving statement: ``At the age of 80 I cannot expect to do much more mathematics. I may however state a number of questions where perhaps further research might lead to interesting results''. One of these questions was regarding intrinsic and extrinsic approximation on the Cantor set.\footnote{In this paper, the phrase ``Cantor set'' always refers to the ternary Cantor set.} In Mahler's words, ``How close can irrational elements of Cantor's set be approximated by rational numbers

\begin{enumerate}
\item	In Cantor's set, and
\item	By rational numbers not in Cantor's set?''\footnote{Our paper is mainly concerned with the first question; we will consider the second in \cite{FishmanSimmons2}.}

\end{enumerate}

In contrast to intrinsic approximation on the Cantor set in particular and on fractals in general, the Diophantine approximation theory of the real line is classical, extensive, and essentially complete as far as characterizing how well real numbers can be approximated by rationals (\cite{Schmidt3} is a standard reference). The basic result on approximability of all reals is

\begin{theorem*}[Dirichlet's Approximation Theorem] 
For each $x \in \mathbb{R}$ and for any $Q \in \mathbb{N}$ there exists $p/q\in \mathbb{Q}$ with $1\leq q \leq Q$, such that
\[
\bigl|x-p/q\bigr|<\frac{1}{qQ}.
\]
\end{theorem*}

\begin{corollary*} For every irrational $x \in \mathbb{R}$, 
\[
\label{dirich}
\bigl|x-p/q\bigr|<\frac{1}{q^2}
\]
for infinitely many $p/q\in\mathbb{Q}$.
\end{corollary*}

The optimality of this approximation function (up to a multiplicative constant) is demonstrated by the existence of badly approximable numbers, i.e. reals $x$ such that for some $c(x)>0$
\[
\bigl|x- p/q \bigr| > \frac{c(x)}{q^2} \hspace{2mm} \text{ for all } p/q \in \mathbb{Q}.
\]

It is well known that the set of very well approximable numbers, i.e. the set of all reals $x$ satisfying for some positive $\epsilon(x)$
\[
\bigl|x- p/q \bigr| < \frac{1}{q^{2+\epsilon(x)}}
\]
for infinitely many rationals $p/q$, is null. This fact demonstrates that this approximation function cannot be improved for almost all irrationals. We remark that the subject of approximating points on fractals by rationals has been extensively studied in recent years; see for example \cite{Fishman, KleinbockWeiss1, KTV} for badly approximable numbers and \cite{KLW, Weiss} for very well approximable numbers. In \cite{Bugeaud2}, elements of the Cantor set with any prescribed irrationality exponent were explicitly constructed.

Recently in \cite{BFR}, R. Broderick, A. Reich, and the first named author made what could be considered as a first step towards answering Mahler's question:

\begin{proposition}[{\cite[Corollary 2.2]{BFR}}]
\label{BFRresult}
Let $C$ be the Cantor set and $d = \dim C$. Then 
for all $x\in C$, there exist infinitely many solutions $p\in\mathbb{Z}$,
$q\in\mathbb{N}$, $p/q\in C$ to \[
\bigl|x-p/q\bigr|<\frac{1}{q(\log _{3} q)^{1/d}}.\]
\end{proposition}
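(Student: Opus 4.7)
My plan is to adapt Dirichlet's classical pigeonhole argument to $C$, exploiting two structural facts. First, the base-$3$ shift $T(x) := 3x - \lfloor 3x \rfloor$ preserves $C$: it drops the leading ternary digit, and every digit of a point of $C$ lies in $\{0,2\}$. Second, $C$ is partitioned at level $n$ into $2^n$ cylinders, each of diameter $3^{-n}$. Writing $Q := 2^n$ and recalling $d = \log_3 2$, this diameter equals $Q^{-1/d}$, so the self-similar cover of $C$ plays the role of the $Q$ intervals of length $1/Q$ in the classical proof of Dirichlet.

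Fix $x \in C$ with ternary expansion $x = 0.a_1 a_2 \ldots$ and fix $n \ge 1$. Applying pigeonhole to the $Q+1$ orbit points $T^0 x, T^1 x, \ldots, T^Q x$, all of which lie in $C$, I obtain indices $0 \le i < j \le Q$ such that $T^i x$ and $T^j x$ share a common level-$n$ cylinder; equivalently, $a_{i+s} = a_{j+s}$ for every $s = 1, \ldots, n$. Setting $k := j - i$, I then form the candidate rational
\[
r = 0.a_1 a_2 \ldots a_i \overline{a_{i+1} \ldots a_j}_3 \in C,
\]
whose denominator $q$ satisfies $q \le 3^i(3^k - 1) < 3^j$ (using $\gcd(3^i, 3^k - 1) = 1$).

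The key step is to show that $x$ and $r$ agree in their first $j + n$ ternary digits, so that $|x - r| \le 3^{-(j+n)}$. For $t = 1, \ldots, n$, the digit of $r$ at position $j + t$ is $a_{i + \phi(t)}$, where $\phi(t) := ((t-1) \bmod k) + 1$; a short induction on $t$, chaining the relations $a_{i+u} = a_{j+u}$ for $u = 1, \ldots, n$, shows $a_{i + \phi(t)} = a_{j+t}$, as required. Combining the bound $|x - r| \le 3^{-(j+n)}$ with $3^{-j} \le 1/q$ and $3^{-n} = Q^{-1/d}$ gives $|x - r| \le (1/q) Q^{-1/d}$; since $q < 3^j \le 3^Q$ forces $\log_3 q < Q$, the desired bound $|x - r| < 1/\bigl(q(\log_3 q)^{1/d}\bigr)$ follows. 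Letting $n \to \infty$ produces infinitely many distinct rationals converging to $x$ (a minor separate argument handles the case when $x$ itself is rational, where the pigeonhole may collapse).

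The main obstacle is the inductive digit-chasing in the third paragraph: verifying that the $n$-digit agreement of $T^i x$ and $T^j x$ propagates to the $(j+n)$-digit agreement of $x$ and $r$ even when the short period $k$ is much smaller than $n$. Apart from this, the proof is a clean transfer of Dirichlet's pigeonhole argument to the fractal setting, and the $(\log_3 q)^{1/d}$ improvement over the trivial $1/q$ bound is exactly the gain one obtains by pigeonholing into $Q$ cylinders of diameter $Q^{-1/d}$ rather than $Q$ intervals of length $1/Q$.
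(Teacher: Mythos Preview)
Your argument is correct and is precisely the specialization to the Cantor set of the paper's proof of Theorem~\ref{theoremdirichlet} (the paper does not prove Proposition~\ref{BFRresult} separately, citing it from \cite{BFR}): your level-$n$ cylinder pigeonhole is the fractal pigeonhole principle with the exact constant, your shift $T$ is the symbolic shift $\sigma$, and your periodic ternary expansion $r$ is the fixed point $u_{(1)}(F_2)$. The one stylistic difference is that where you do the digit-chasing induction to propagate the $n$-digit agreement of $T^ix$ and $T^jx$ to $(j+n)$-digit agreement of $x$ and $r$, the paper instead uses the contraction estimate $|y-F_2|\le \frac{\lambda_{(2)}}{1-\lambda_{(2)}}|z-y|$ for the fixed point $F_2$ of $u_{(2)}$, which packages the same induction more cleanly and works for a general rational IFS; conversely, your explicit combinatorics yields the sharp constant $1$ in place of the paper's unspecified $K$.
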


The proof of the above proposition crucially depends on the $\times 3$ invariance of the middle third Cantor set, and a similar result was proven in \cite{BFR} for any $\times d$-invariant totally disconnected Cantor-like set.

The main motivation of this paper is to provide a better understanding of intrinsic Diophantine approximation on fractals. We do this by first generalizing Proposition \ref{BFRresult} by removing the $\times d$ constraint (Theorem \ref{theoremdirichlet}):

\begin{definition}
Let $J$ be a subset of $\R$ and let $\psi:\N\rightarrow(0,\infty)$ be any function. We will say that a point $x\in J$ is \emph{intrinsically approximable with respect to $\psi$} if there exist infinitely many rationals $p/q\in\Q\cap J$ such that
\[
\left|x - p/q\right| \leq \frac{\psi(q)}{q}.
\]
We will say that $x$ is \emph{badly intrinsically approximable with respect to $\psi$} if there exists $\varepsilon > 0$ such that $x$ is not intrinsically approximable with respect to the function $\varepsilon\psi$. Otherwise, we will say that $x$ is \emph{intrinsically well approximable with respect to $\psi$}.
\end{definition}

\begin{definition}
\label{definitionIFS}
Let $E$ be a finite set. An \emph{iterated function system} (\emph{IFS}) on $\R$ is a collection $(u_a)_{a\in E}$ of contracting similarities $u_a:\R\to\R$ satisfying the \emph{open set condition}: there exists an open set $W\subset\R$ such that the collection $(u_a(W))_{a\in E}$ is a disjoint collection of subsets of $W$ (see \cite{Hutchinson} for a thorough discussion). The \emph{limit set} of $(u_a)_{a\in E}$ is the image of the \emph{coding map} $\pi:E^\N\to\R$ defined by
\[
\pi(\omega) = \lim_{n\to\infty} u_{\omega_1}\cdots u_{\omega_n}(0),
\]
and will be denoted $J$. We will call the IFS $(u_a)_{a\in E}$ \emph{rational} if for each $a\in E$, $u_a$ preserves $\Q$, i.e.
\begin{equation}
\label{rationalIFS}
u_a(x) = \frac{p_a}{q_a}x + \frac{r_a}{q_a}\hspace{5mm}\text{with }\hspace{5mm} p_a,r_a\in\Z, q_a\in\N.
\end{equation}
\end{definition}

\begin{reptheorem}{theoremdirichlet}
Suppose that $(u_a)_{a\in E}$ is a rational IFS and let $J$ be the limit set of this IFS. Let $\delta$ denote the Hausdorff dimension of $J$. Let
\[
\label{gamma}
\gamma := \max_{a\in E} \frac{\log|p_a|}{\log(q_a)},
\]
where $p_a,q_a$ are as in \textup{(\ref{rationalIFS})}. There exists $K < \infty$ such that for each $x\in J$ and for each $Q\geq \qmax := \max_a q_a$ there exists $p/q\in \Q\cap J$ with $q\leq Q$ such that
\[
\left|x - p/q\right| \leq K q^{\gamma - 1}\log(Q)^{-1/\delta}.
\]
In particular, if $x$ is irrational then $x$ is intrinsically approximable with respect to the function
\[
\label{qgammalnqdelta}
\psi(q) := K q^\gamma \log(q)^{-1/\delta}.
\]
\end{reptheorem}

Notice that the Dirichlet-type theorems in \cite{BFR} are immediate consequences of Theorem \ref{theoremdirichlet} as $\gamma = 0$ whenever $p_a = \pm 1$ for all $a\in E$.\\

Let $J$ be a $\times d$-invariant limit set, i.e. the set of all points $x\in [0,1]$ such that the digits of the base $d$ expansion of $x$ are contained in some fixed set $E\subset\{0,\ldots,d - 1\}$ (e.g. the Cantor set). The algebraic structure makes it easy to find rationals in $J$, but in Section \ref{translations} we consider what happens when we lose (most of) the algebraic structure and keep only the geometric structure when considering random translations and dilations. Specifically, we wish to investigate how ``generic'' the approximation function in Theorem \ref{theoremdirichlet} is with respect to random translations and dilations.

We begin by showing that if $f$ is an affine transformation, then it is unlikely that $f(J)$ intersects $\Q$ densely in the following senses:
\begin{itemize}
\item If $f$ is random, then $f(J)$ does not intersect $\Q$.
\item If $f$ is random subject to $f(J)\cap\Q\neq\emptyset$, then $\#(f(J)\cap\Q) = 1$.
\item If $f$ is random subject to $\#(f(J)\cap\Q)\geq 2$, then $\#(f(J)\cap\Q) = 2$.
\end{itemize}

We remark that the third of these assertions is the only one which is difficult to prove.\comlior{changed the phrasing slightly in the last 2 paragraphs} Formally, we have the following:

\begin{repobservation}{observationrandomsimilarity}
Let $\lambda$ be Lebesgue measure on $\R$, and define $\mu = \HH^\delta\given J$.
For each $x,y\in\R$, let $f_{x,y}$ be the unique simiarity so that $f_{x,y}(x) = 0$ and $f_{x,y}(y) = 1$. Then:
\begin{itemize}
\item[(i)] For $\lambda\times\lambda$-almost every $(x,y)\in\R^2$, we have $f_{x,y}(J)\cap\Q = \emptyset$.
\item[(ii)] For $\mu\times\lambda$-almost every $(x,y)\in\R^2$, we have $f_{x,y}(J)\cap\Q = \{0\}$.
\end{itemize}
In particular, in each of these cases $f_{x,y}(J)$ does not intersect the rational numbers densely.
\end{repobservation}

\begin{reptheorem}{theoremrandomsimilarity}
Let $J$ be a $\times d$-invariant set. Then for $\mu\times\mu$-almost every $x,y\in J$,
\[
f_{x,y}(J)\cap\Q = \{0,1\}.
\]
In particular, $f_{x,y}(J)$ does not intersect densely with $\Q$.
\end{reptheorem}

Next, we consider the case where $f$ is not just an affine transformation but a translation. In this case, if $f$ is random, then $f(J)$ still does not intersect $\Q$ (Observation \ref{observationrandomtranslation}), but if $f(J)$ does intersect $\Q$, then it does so densely. In fact, we can prove more:

\begin{replemma}{dense}
Suppose that $J$ is a $\times d$-invariant limit set. For each $x\in\R$, let $f_x(t) = t - x$, and let $J_x = f_x(J)$. Suppose that $x\in\R$ is such that $\Q\cap J_x\neq\emptyset$. Then $\Q\cap J_x$ is dense in $J_x$. Furthermore, there exists $K < \infty$ such that every point $y\in J_x$ is intrinsically approximable with respect to the constant function
\[
\label{Kqq}
\psi(q) = K.
\]
\end{replemma}

A natural question is whether the approximation function is optimal. In order to answer this question we will assume that
\begin{enumerate}[i)]
\item $d$ is prime, and
\item the set $E$ of allowable digits contains no two adjacent integers, and contains both $0$ and $(d - 1)$.
\end{enumerate}
These assumptions are satisfied, for example, if $J$ is the Cantor set. As before, let $\delta$ be the Hausdorff dimenson of $J$, and let $\mu = \HH^\delta\given J$.

\begin{reptheorem}{theoremcase0ba}
Suppose that $J$ satisfies \textup{(i)-(ii)}, and let $x\in J$ be a $\mu$-random point. Then the set of numbers $y\in J_x$ which are badly intrinsically approximable with respect to the approximation function $\psi(q)=K$ is of Hausdorff dimension $\delta$.
\end{reptheorem}

\ignore{
We will consider the following case:
\begin{enumerate}[i)]
\item $d$ is prime
\item The set $E$ of allowable digits contains no two adjacent integers, and contains both $0$ and $(d - 1)$.
\item Every rational in $J_x$ is of the form $p/d^n$ for some $p,n\in\N$.
\end{enumerate}
If (i)-(iii) hold, we shall say that $J$ and $x$ satisfy Case 0.
}

In addition, we demonstrate a correspondence between the above case and the case considered by Levesley, Salp, and Velani \cite{LSV}, who approximated points in the Cantor set by the left endpoints of the Cantor set. By translating their results into our setting we can prove the following theorem:

\begin{reptheorem}{theoremcase0khinchin}
Let $J$ satisfy \textup{(i)-(ii)}, and let $x\in J$ be a $\mu$-random point. Fix any function $\psi:\N\rightarrow(0,\infty)$. Let $f$ be a dimension function such that $t\mapsto t^{-\delta}f(t)$ is monotonic. If we denote the set of $\psi$-intrinsically well approximable points by $\WA_{\psi,\mathrm{int}}$, then
\[
\HH^f(\WA_{\psi,\mathrm{int}}) = \begin{cases}
0 & \textup{if }\sum_{n = 1}^\infty f(\psi(d^n)/d^n) (d^n)^\delta < \infty\\
\HH^f(J) & \textup{if }\sum_{n = 1}^\infty f(\psi(d^n)/d^n) (d^n)^\delta = \infty
\end{cases},
\]
where $\HH^f$ is Hausdorff $f$-measure.
\end{reptheorem}

\begin{repcorollary}{corKhinchin}
If $\psi(q) = \log(q)^{-1/\delta}$, then almost every point is intrinsically well approximable with respect to $\psi$; if $\psi(q) = \log(q)^{-(1 + \varepsilon)/\delta}$, then almost every point is badly intrinsically approximable with respect to $\psi$.
\end{repcorollary}

\ignore{
Additionally, we prove the following theorem demonstrating the optimality of the Dirichlet approximation function $\psi(q) = 1$:

\begin{reptheorem}{theoremcase0ba}
Let $J$ and $x$ satisfy Case 0 and let $\delta$ be the Hausdorff dimension of $J$. Then the set of numbers $y\in J_x$ which are badly intrinsically approximable with respect to the approximation function $\psi(q)=K$ is of Hausdorff dimension $\delta$.
\end{reptheorem}
}


\ignore{
In fact, Case 0 is the only case in which we are able to say anything about the optimality of the approximation function. We remark that (i)-(ii) are reasonable and easy-to-check assumptions that are satisfied e.g. for the standard Cantor set. Finally, we prove that condition (iii) is generic:

\begin{reptheorem}{exceptions}
Suppose that $J$ is a $\times d$-invariant limit set satisfying \textup{(i)-(ii)}. The set $S$ consisting of all $x\in\bigcup_{p/q\in\Q}J_{p/q}$ for which \textup{(iii)} does NOT hold is small with respect to both measure and category, i.e. $\HH^\delta(S) = 0$ and $S\cap J$ is meager in $J$.
\end{reptheorem}
}

In Sections \ref{sectionintrinsicdenominator} and \ref{optimality} we discuss the question of whether the approximation function of Theorem \ref{theoremdirichlet} is optimal. The starting point is the observation that the method of Theorem \ref{theoremdirichlet} produces only rational numbers of a particular form. Specifically, if we let $\pi:E^\N\rightarrow J$ be the coding map (see Definition \ref{definitionIFS} above), then Theorem \ref{theoremdirichlet} produces rationals of the form $\pi(\omega)$, where $\omega\in E^\N$ is an eventually periodic word.

Secondly, when Theorem \ref{theoremdirichlet} does produce a rational number then it does not produce it in reduced form. For example, the fraction $1/4$ in the Cantor set $C$ would be represented as $2/8$. Consequently, we will call the number $8$ the \emph{intrinsic denominator} of $1/4$ with respect to the fractal $C$ (defined precisely in Section \ref{sectionintrinsicdenominator}). Note that a rational can only have an intrinsic denominator if it has a preimage which is an eventually periodic word. We denote the intrinsic denominator of $p/q$ given by the IFS by $\qint$, whereas the denominator of $p/q$ in reduced form will be denoted $\qred$. (Following our above example, $\qint =8$ while $\qred =4$). It is easily observed that for every $p/q\in J$ we have $\qred \divides \qint$.

As a result of this analysis, the question of whether Theorem \ref{theoremdirichlet} is optimal can now be split into three sub-questions:

\begin{enumerate}[(i)]
\item Does every rational in $J$ have a preimage under $\pi$ which is eventually periodic? In other words, are the rationals in $J$ that have intrinsic denominators the only ones that exist?
\item If $p/q$ is a rational in $J$ that has an intrinsic denominator, what is the ratio between its intrinsic denominator $\qint$ and its reduced denominator $\qred$?
\item Is the approximation function (\ref{qgammalnqdelta}) optimal if we only consider rationals in $J$ which come from periodic words, and if we consider the intrinsic denominator to be the true denominator of the rational?
\end{enumerate}

In Section \ref{sectionintrinsicdenominator} we will consider questions (i) and (iii), and in Section \ref{optimality} we will consider question (ii). In each case we have only partial results. In appears that all three questions are hard when considered in full generality, although it seems (ii) is the hardest.

Question (i) is the easiest to deal with. In the case of the Cantor set (or more generally of a $\times d$-invariant set), the answer is already well-known. The fact that every rational in $J$ has a preimage under $\pi$ which is eventually periodic is merely a restatement of the fact that every rational number has an eventually periodic base $d$ expansion. We slightly generalize this result with the following lemma:

\begin{replemma}{lemmacanonical}
Suppose that $p_a = \pm 1$ for all $a\in E$, where $p_a$ are given by \textup{(\ref{rationalIFS})}. Then every rational in $J$ is the image of an eventually periodic word (and therefore has an intrinsic denominator).
\end{replemma}

We next consider question (iii):
\begin{repdefinition}{badsymbolic}
Let $\psi:(0,\infty)\rightarrow(0,\infty)$ be a nonincreasing function. A point $x\in J$ is said to be \emph{badly symbolically approximable with respect to $\psi$} if there exists $\varepsilon > 0$ such that for all $p/q\in\Q\cap J$ we have
\[
\left|x - p/q\right| \geq \varepsilon\frac{\psi(\qint)}{\qint}.
\]
Otherwise, $x$ is said to be \emph{symbolically well approximable with respect to $\psi$}.
\end{repdefinition}

It thus follows that badly intrinsically approximable implies badly symbolically approximable, but not vice-versa.

Rather than attempting to demonstrate the existence of numbers which are badly symbolically approximable with respect to the Dirichlet function (\ref{qgammalnqdelta}), we instead prove a Khinchin-type theorem. Our motivation for this is that it seems less likely that the intrinsic denominator differs greatly from the denominator in reduced form for the rational approximations of almost every point, than that it differs greatly for the approximants of a single point.

An immediate corollary of Theorem \ref{theoremkhinchin} is the following:

\begin{repcorollary}{KhinchinCantor}
Let $C$ be the Cantor set and $\mu$ the Hausdorff measure in the Cantor's set dimension restricted to $C$.
Then for $\mu$-almost every $x\in J$, $x$ is badly symbolically approximable with respect to $\psi(q) = \log(q)^{-(2/\delta + \varepsilon)}$ 
and is symbolically well approximable with respect to $\psi(q) = \log(q)^{-2/\delta}.$
\end{repcorollary}

In Section \ref{optimality}, we restrict ourself to the case where the limit set is the Cantor set $C$. We begin by recalling the following conjecture from \cite{BFR}:

\begin{repconjecture}{conjectureBFR}[{\cite[Conjecture 3.3]{BFR}}]
If
\[
S_n := \{p/q\in C:\gcd(p,q) = 1,\;3^{n - 1} \leq q < 3^n\}
\]
then for all $\varepsilon_1 > 0$ we have
\[
\#(S_n) = O(2^{n(1 + \varepsilon_1)}).
\]
\end{repconjecture}

This conjecture is immediately relevant to intrinsic approximation as it implies (see \cite[Corollary 3.4]{BFR}) that $\mu(\VWA_C) = 0$, where
\[
\VWA_C := \{x\in C:\exists \varepsilon > 0\;\;\exists^\infty p/q\in C \;\; |x - p/q| \leq q^{-(1 + \varepsilon)}\}.
\]
In particular, this would imply that $C\butnot\VWA_C\neq\emptyset$ and so the approximation exponent is optimal in Theorem \ref{theoremdirichlet}.

We cannot prove Conjecture \ref{conjectureBFR} at this time, but we will reduce it to a simpler conjecture which a heuristic argument suggests is true. Suppose that $p/q$ is a rational number. The \emph{period} of $p/q$ is the period of the ternary expansion of $p/q$, and will be denoted $P(p/q)$.

The first step we make is proving the following theorem (using a result of Ramanujan \cite{Ramanujan} concerning the number-of-divisors function):

\begin{reptheorem}{theoremSnK}
For every $K < \infty$, if
\[
S_n^{(K)} := \{p/q\in C:\gcd(p,q) = 1,\;3^{n - 1} \leq q < 3^n,\textup{ and }P(p/q) \leq K \log(q)\}
\]
then for all $\varepsilon_1 > 0$ we have
\[
\#(S_n^{(K)}) = O(2^{n(1 + \varepsilon_1)}).
\]
\end{reptheorem}

We then provide a heuristic argument to support the following conjecture:

\begin{repconjecture}{conjectureheuristic}
For all $K > 2/\log(3/2)$, we have $S_n^{(K)} = S_n$ for all $n$ sufficiently large. In particular
\[
\#(S_n\butnot S_n^{(K)}) = o(1).
\]
\end{repconjecture}

The following is a corollary of Theorem \ref{theoremSnK}:

\begin{repcorollary}{corollaryoftheoremSnK}
Conjecture \ref{conjectureheuristic} implies Conjecture \ref{conjectureBFR}, implying $\mu(\VWA_C) = 0$.
\end{repcorollary}

\ignore{

Finally, let us note that Conjecture \ref{conjectureBFR} also has relevance to Mahler's second question regarding extrinsic approximation. Indeed, if $x$ is any point which is badly intrinsically approximable with respect to the function
\[
\psi(q) = q^{-1 + \varepsilon},
\]
for some $\varepsilon > 0$, then $x$ is extrinsically approximable with respect to Dirichlet's function $\psi(q) = q^{-1}$. To see this, note that by Dirichlet's Theorem there is a sequence $p_n/q_n\tendsto n x$ such that $|x - p_n/q_n|\leq 1/q_n^2$, but since $x$ is badly intrinsically approximable with respect to $\psi(q) = q^{-1 + \varepsilon}$, it follows that only finitely many of these approximations can be intrinsic. Thus if Conjecture \ref{conjectureBFR} is correct, then almost every point on the Cantor set is extrinsically approximable with respect to the function $\psi(q) = q^{-1}$.\\

}

\textbf{Acknowledgements.} Both authors would like to thank Y. Bugeaud and M. Urba\'nski for helpful suggestions and comments.
This work was partially supported by a grant from the Simons Foundation \#245708.

\newpage

\section{A Dirichlet-type theorem for fractals}
\label{general}

We consider a finite set (alphabet) $E$ and denote by $E^r$ the set of all words of length $r$ formed using this alphabet and by $E^*$ the set of all words, finite or infinite, formed using this alphabet. If $\omega\in E^*$, then we denote subwords of $\omega$ by
\[
\omega_{n + 1}^{n + r} := (\omega_{n + i})_{i = 1}^r \in E^r.
\]
We denote the concatenation of $\omega$ and $\tau$ by $\omega\concat\tau$. Furthermore, we define the shift map
\[
\sigma:E^\N\rightarrow E^\N
\]
by $\sigma(\omega) = \omega_2^\infty = (\omega_{i + 1})_{i\in\N}$. If $\omega\in E^n$ is a finite word then we define
\[
u_\omega(x) := u_{\omega_1}\circ\ldots\circ u_{\omega_n}(x).
\]
We define the map $\pi:E^\N\rightarrow \R$ by
\[
\pi(\omega) := \lim_{n\rightarrow\infty}u_{\omega_1^n}(0)
\]
and we define the limit set $J$ to be the image of this map. Let $\delta$ be the Hausdorff dimension of $J$, and let $\mu$ be the $\delta$-dimensional Hausdorff measure restricted to $J$, normalized to be a probability measure.

\begin{theorem}[Dirichlet for fractals]
\label{theoremdirichlet}
Suppose that $(u_a)_{a\in E}$ is a rational IFS and let $J$ be the limit set of this IFS. Let
\begin{equation}
\label{gamma}
\gamma := \max_{a\in E} \frac{\log|p_a|}{\log(q_a)},
\end{equation}
where $p_a,q_a$ are as in \textup{(\ref{rationalIFS})}. There exists $K < \infty$ such that for each $x\in J$ and for each $Q\geq \qmax := \max_a q_a$ there exists $p/q\in \Q\cap J$ with $q\leq Q$ such that
\[
\left|x - p/q\right| \leq K q^{\gamma - 1}\log(Q)^{-1/\delta}.
\]
In particular, if $x$ is irrational then $x$ is intrinsically approximable with respect to the function
\begin{equation}
\label{qgammalnqdelta}
\psi(q) := K q^\gamma \log(q)^{-1/\delta}.
\end{equation}
\end{theorem}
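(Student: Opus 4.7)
The plan is to combine a stopping-time argument with a pigeonhole on tails of the symbolic coding. Writing $x = \pi(\omega)$ with $\omega \in E^\N$, the target rational will be $\pi(\widetilde\omega)$ for an eventually periodic word $\widetilde\omega$ built from a near-repetition in $\omega$: if two tails $\sigma^{k_1}\omega$ and $\sigma^{k_2}\omega$ ($k_1 < k_2$) begin with the same block $\tau$, then
\[
\widetilde\omega := \omega_1^{k_1}\concat \overline{\omega_{k_1+1}^{k_2}}
\]
is eventually periodic and $\pi(\widetilde\omega)$ equals $u_{\omega_1^{k_1}}$ applied to the rational fixed point of $u_{\omega_{k_1+1}^{k_2}}$. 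By construction $\omega$ and $\widetilde\omega$ coincide in positions $1,\ldots, k_2 + |\tau|$, so $x$ and $\pi(\widetilde\omega)$ both lie in the cylinder $u_{\omega_1^{k_2+|\tau|}}(J)$.

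To run the pigeonhole at the correct scale I measure word length by contraction rather than by number of letters. For $t>0$ let
\[
A(t) := \{\tau\in E^* : \alpha_\tau \leq t < \alpha_{\tau_1^{|\tau|-1}}\},
\]
where $\alpha_\tau := \prod_i |p_{\tau_i}|/q_{\tau_i}$ is the contraction of $u_\tau$. This is a complete antichain in $E^*$. The Hausdorff dimension $\delta$ satisfies the Moran equation $\sum_{a\in E}(|p_a|/q_a)^\delta = 1$, so refining the root one symbol at a time yields $\sum_{\tau \in A(t)}\alpha_\tau^\delta = 1$, and the lower bound $\alpha_\tau \geq (\min_a |p_a|/q_a)\, t$ for $\tau\in A(t)$ then gives $|A(t)| \leq C_0 t^{-\delta}$ with $C_0$ depending only on the IFS.

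Given $Q \geq \qmax$, set $N := \lfloor \log Q/\log\qmax\rfloor$ and choose $t := C_1 (\log Q)^{-1/\delta}$ with $C_1$ large enough that $|A(t)| < N$. For each $k = 0,1,\ldots,N-1$ the tail $\sigma^k\omega$ has a unique prefix $\tau^{(k)}\in A(t)$ (completeness of $A(t)$), and by pigeonhole two of these agree, producing $0 \leq k_1 < k_2 \leq N-1$ and a common word $\tau$ with $\omega_{k_1+1}^{k_1+|\tau|} = \omega_{k_2+1}^{k_2+|\tau|} = \tau$. Form $\widetilde\omega$ as above and set $p/q := \pi(\widetilde\omega)$. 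Writing $u_{\omega_{k_1+1}^{k_2}}(y) = (Py+R)/Q'$ with $Q' := \prod_{i=k_1+1}^{k_2} q_{\omega_i}$, the fixed point has denominator dividing $Q'-P$, so $q$ divides $\prod_{i=1}^{k_2}q_{\omega_i}$; in particular $q \leq \qmax^{k_2}\leq Q$.

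The diameter estimate on the common cylinder gives
\[
|x - p/q| \leq \operatorname{diam}(J)\cdot \alpha_{\omega_1^{k_2}}\cdot \alpha_\tau.
\]
From the definition of $\gamma$, $|p_a|/q_a = q_a^{\gamma_a - 1} \leq q_a^{\gamma - 1}$ for every $a\in E$ (using $\gamma_a \leq \gamma$ and $q_a > 1$), hence $\alpha_{\omega_1^{k_2}} \leq \bigl(\prod_{i\leq k_2} q_{\omega_i}\bigr)^{\gamma - 1} \leq q^{\gamma - 1}$ because $\gamma - 1 \leq 0$; together with $\alpha_\tau \leq t = C_1(\log Q)^{-1/\delta}$ this gives the theorem. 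The main obstacle I expect is the Moran-style counting $|A(t)| \leq C_0 t^{-\delta}$, since that step is exactly what ties $\delta$ to the pigeonhole threshold and delivers the $\log^{-1/\delta}$ savings over the trivial approximation. One also has to verify that $\omega$ and $\widetilde\omega$ remain equal through position $k_2 + |\tau|$ when $|\tau| > k_2 - k_1$; this follows because the pigeonhole hypothesis then forces $\omega$ itself to be periodic with period $k_2 - k_1$ throughout the relevant range, so both words share this periodicity.
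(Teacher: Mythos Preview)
Your argument is correct, with one small slip: the fixed point of $u_{\omega_{k_1+1}^{k_2}}$ has denominator dividing $Q'-P$, so $q$ divides $Q_1(Q'-P)$, not $\prod_{i\le k_2}q_{\omega_i}$; when some $p_a<0$ one has $Q'-P\leq 2Q'$, so $q$ may exceed $\prod_{i\le k_2}q_{\omega_i}$ by a factor of~$2$. Since $k_2\le N-1$ and $q_{\max}\ge 2$ you still get $q\le Q$, and the estimate $\alpha_{\omega_1^{k_2}}\le q^{\gamma-1}$ survives up to a harmless factor $2^{1-\gamma}$, so the conclusion is unaffected.

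The paper follows the same outline---pigeonhole on the tails $\sigma^k\omega$, build the same eventually periodic word, bound the error---but executes the pigeonhole \emph{metrically} rather than symbolically. It cites Ahlfors $\delta$-regularity of $\mu=\HH^\delta\given_J$ to conclude that among $N+1$ points of $J$ two lie within $r_N\asymp N^{-1/\delta}$, applies this to the projected tails $\pi(\sigma^k\omega)$, and then controls $|x-p/q|$ via the fixed-point contraction estimate $|y-F_2|\le \frac{\lambda_{(2)}}{1-\lambda_{(2)}}|y-z|$. Your Moran-cover pigeonhole is arguably more self-contained (it uses only the Moran equation, not the regularity theorem), and because it yields \emph{identical} symbolic prefixes rather than merely nearby tail images, the error is read off directly as a cylinder diameter; the cost is the overlap verification when $|\tau|>k_2-k_1$, which the paper's fixed-point inequality handles automatically. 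Both routes deliver the same exponent and the same dependence on $Q$.
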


\begin{proof}
Recall [\cite[Theorem 4.14]{Mattila}] that the measure $\mu := \HH^\delta\given_J/\HH^\delta(J)$ is Ahlfors $\delta$-regular on $J$, i.e.
\[
\mu(B(x,r))\asymp r^\delta,
\]
where $x\in J$ and $0 < r \leq 1$.\footnote{Here and from now on the symbols $\lesssim$, $\gtrsim$, and $\asymp$ denote multiplicative asymptotics; an addition of the subscript $\plus$ denotes an additive asymptotic. For example, $A\lesssim_\plus B$ means that there exists a constant $C > 0$ such that $A\leq B + C$.} Thus if $(x_n)_{n = 0}^{N - 1}$ is an $r$-separated sequence in $J$, i.e. if $\dist(x_n,x_m)\geq r$ for all $n\neq m$, then the balls $(B(x_n,r/2))_{n = 0}^{N - 1}$ are disjoint and so
\[
1 = \mu(J) \geq \sum_{n = 0}^{N - 1}\mu(B(x_n,r/2)) \asymp \sum_{n = 0}^{N - 1}(r/2)^\delta \asymp N r^\delta.
\]
Thus there exists $K_1 < \infty$ depending only on $J$ such that $N r^\delta\leq K_1$. Taking the contrapositive gives
\begin{lemma}[Fractal pigeonhole principle]
If $(x_n)_{n = 0}^N$ is any finite sequence in $J$, then there exist distinct integers $0\leq n,m\leq N$ such that
\[
|x_n - x_{n + m}| \leq r_N := (N/K_1)^{-1/\delta}.
\]
\end{lemma}

Now suppose we have $x\in J$ and $Q \geq \qmax$. Fix $N\in\N$ to be determined. Let $\omega\in E^\N$ be a preimage of $x$ under $\pi$. We consider the iterates of $\omega$ under the shift map $\sigma$. We apply the fractal pigeonhole principle to the sequence $(\pi\circ\sigma^n(\omega))_{n = 0}^N$ to conclude that there exist two integers $0 \leq n < n + m \leq N$ such that if $y = \pi\circ\sigma^n(\omega)$ and $z = \pi\circ\sigma^{n + m}(\omega)$, then $|y - z| \leq r_N$.

Let $u_{(1)} = u_{\omega_1^n}$ and let $u_{(2)} = u_{\omega_{n + 1}^{n + m}}$. Then $x = u_{(1)}(y)$, and $y = u_{(2)}(z)$. Let
\begin{align*}
p_{(1)} &= p_{\omega_1}\cdots p_{\omega_n}&
p_{(2)} &= p_{\omega_{n + 1}}\cdots p_{\omega_{n + m}}\\
q_{(1)} &= q_{\omega_1}\cdots q_{\omega_n}&
q_{(2)} &= q_{\omega_{n + 1}}\cdots q_{\omega_{n + m}}\\
r_{(1)} &= \sum_{i = 1}^n p_{\omega_1}\cdots p_{\omega_{i - 1}}r_{\omega_i}q_{\omega_{i + 1}}\cdots q_{\omega_n}&
r_{(2)} &= \sum_{i = 1}^m p_{\omega_{n + 1}}\cdots p_{\omega_{n + i - 1}}r_{\omega_{n + i}}q_{\omega_{n + i + 1}}\cdots q_{\omega_{n + m}}
\end{align*}
so that
\[
u_{(i)}(t) = \frac{p_{(i)}}{q_{(i)}}t + \frac{r_{(i)}}{q_{(i)}}.
\]

The unique fixed point $F_2$ of the contraction $u_{(2)}$ is given by the equation
\[
F_2 = \frac{p_{(2)}}{q_{(2)}}F_2 + \frac{r_{(2)}}{q_{(2)}}
\]
and after solving for $F_2$
\[
F_2 = \frac{r_{(2)}}{q_{(2)} - p_{(2)}}.
\]
In particular, $F_2\in\Q$. Let
\begin{equation}
\label{canonicalform}
p/q = u_{(1)}(F_2) = \frac{p_{(1)}}{q_{(1)}}\frac{r_{(2)}}{q_{(2)} - p_{(2)}} + \frac{r_{(1)}}{q_{(1)}}.
\end{equation}
Here, we mean that $p\in\Z$, $q\in\N$ are the result of adding the fractions in the usual way, without reducing. In particular, $q = q_{(1)}(q_{(2)} - p_{(2)}) \leq q_{(1)}q_{(2)}$. Note that $p/q = \pi(\omega_1^n\concat[\omega_{n + 1}^{n + m}]^\infty) \in J$.

We next want to bound the distance between $x$ and $p/q$. For convenience of notation let $\lambda_{(i)} = |p_{(i)}|/q_{(i)}$ be the contraction ratio of $u_{(i)}$. Now
\[
|y - F_2| = \lambda_{(2)}|z - F_2| \leq \lambda_{(2)}|y - F_2| + \lambda_{(2)}|z - y|.
\]
Solving for $|y - F_2|$ gives
\[
|y - F_2| \leq \frac{\lambda_{(2)}}{1 - \lambda_{(2)}}|z - y|.
\]
Applying $u_{(1)}$ gives
\[
\left|x - p/q\right| \leq \frac{\lambda_{(1)}\lambda_{(2)}}{1 - \lambda_{(2)}}|z - y|
\leq \frac{\lambda_{(1)}\lambda_{(2)}}{1 - \lambda_{(2)}}r_N.
\]
Now $\lambda_{(2)} \leq \max_a \lambda_a < 1$. If we let $K_2 = 1/(1 - \max_a \lambda_a)$ then
\[
\left|x - p/q\right| \leq K_2\lambda_{(1)}\lambda_{(2)}r_N
\]
and on the other hand
\[
q \leq q_{(1)}q_{(2)}.
\]
Expanding and taking logarithms
\begin{align*}
\log|x - p/q| &\leq \log(K_2) + \log(r_N) + \sum_{k = 1}^{n + m}\log(\lambda_{\omega_k})\\
\log(q) &\leq \sum_{k = 1}^{n + m}\log(q_{\omega_k}).
\end{align*}
Now for every $i = 1,\ldots,m$ we have
\[
\log(\lambda_i) \leq (\gamma - 1) \log(q_i)
\]
where $\gamma$ is as in (\ref{gamma}). Thus
\begin{align*}
\log|x - p/q| &\leq \log(K_2) + \log(r_N) + (\gamma - 1) \sum_{k = 1}^{n + m}\log(q_{\omega_k})\\
&\leq \log(K_2) + \log(r_N) + (\gamma - 1) \log(q)
\end{align*}
and exponentiating gives
\[
\left|x - p/q\right| \leq K_2 r_N q^{\gamma - 1}.
\]
Finally, recalling that $\qmax := \max_a q_a$, we have
\[
q \leq \qmax^{n + m}\leq \qmax^N
\]
and so letting $N = \lfloor\log_{\qmax}(Q)\rfloor$ gives $q\leq Q$. Now since $Q\geq\qmax$, we have
\[
r_N \leq (\lfloor\log_{\qmax}(Q)\rfloor/K_1)^{-1/\delta}\leq K_3 \log(Q)^{-1/\delta}
\]
for some $K_3$ sufficiently large. Letting $K = K_2 K_3$ completes the proof.
\end{proof}

\section{Random translations and dilatations} \label{sectiontranslations}
\label{translations}
Fix $d\in\N$ and $E\subseteq \{0,\ldots,d - 1\}$ satisfying $1 < \#(E) < d$, and let
\[
J = \{x\in [0,1]:\textup{ the digits of the base $d$ expansion of $x$ are in $E$}\}.
\]
Such a set $J$ is called a $\times d$-invariant set.

In this section, we consider the image of $J$ under a random affine transformation $f$. We ask whether this set intersects densely ($\cl{f(J)\cap\Q} = f(J)$) with the rational numbers, and if so what the Diophantine properties of the set are. For each $x,y\in\R$, let $f_{x,y}$ be the unique affine map such that $f_{x,y}(x) = 0$ and $f_{x,y}(y) = 1$. Observe that
\[
f_{x,y}^{-1}(\alpha) = (1 - \alpha) x + \alpha y.
\]
\begin{observation}
\label{observationrandomsimilarity}
~
\begin{itemize}
\item[(i)] For $\lambda\times\lambda$-almost every $(x,y)\in\R^2$, we have $f_{x,y}(J)\cap\Q = \emptyset$.
\item[(ii)] For $\mu\times\lambda$-almost every $(x,y)\in\R^2$, we have $f_{x,y}(J)\cap\Q = \{0\}$.
\end{itemize}
Here $\lambda$ is Lebesgue measure on $\R$. In particular, in each of these cases $f_{x,y}(J)$ does not intersect the rational numbers densely.
\end{observation}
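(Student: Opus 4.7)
The plan is to rephrase both parts of the observation as assertions about the measure of certain preimages of $J$ under an affine map, and then apply Fubini's theorem together with the fact that $J$ has Lebesgue measure zero (since $\delta = \log\#E/\log d < 1$ when $\#E < d$).

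First I would unpack the condition. For $x\neq y$, a rational $\alpha\in f_{x,y}(J)\cap\Q$ corresponds to an element $z=f_{x,y}^{-1}(\alpha)=(1-\alpha)x+\alpha y$ lying in $J$. Thus the ``bad'' set for part (i) is the countable union
\[
B := \bigcup_{\alpha\in\Q}\, E_\alpha,\qquad E_\alpha := \{(x,y)\in\R^2 : (1-\alpha)x+\alpha y\in J\},
\]
and for part (ii) it is $B_0 := \bigcup_{\alpha\in\Q\setminus\{0\}}E_\alpha$ (since $\alpha=0$ yields $0\in f_{x,y}(J)$, which is the trivial rational that is permitted to appear). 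It suffices to show every $E_\alpha$ has $\lambda\times\lambda$-measure zero for (i), and $\mu\times\lambda$-measure zero for $\alpha\neq 0$ for (ii); the countability of $\Q$ then completes the argument.

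For part (i), fix $\alpha\in\Q$. If $\alpha=0$, then $E_\alpha=J\times\R$, which is Lebesgue null because $J$ is (as $\delta<1$); similarly if $\alpha=1$. If $\alpha\notin\{0,1\}$, then for each fixed $x$, the $y$-slice of $E_\alpha$ is the affine image $\alpha^{-1}(J-(1-\alpha)x)$, which is Lebesgue null. Fubini then gives $\lambda\times\lambda(E_\alpha)=0$. Taking the union over $\alpha\in\Q$ proves (i).

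For part (ii), only $\alpha\in\Q\setminus\{0\}$ need be considered. For \emph{every} $x\in\R$ (in particular for $\mu$-a.e.\ $x\in J$), the $y$-slice $\{y:(1-\alpha)x+\alpha y\in J\} = \alpha^{-1}(J-(1-\alpha)x)$ is Lebesgue null, so Fubini gives $\mu\times\lambda(E_\alpha)=0$; a countable union over $\alpha$ concludes the proof. There is essentially no obstacle here beyond being careful to isolate the trivial rational $0$ in part (ii); the whole argument rests on the single fact that $J$ is Lebesgue null combined with Fubini.
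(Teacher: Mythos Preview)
Your argument is correct and follows essentially the same route as the paper's own proof: reduce to a fixed $\alpha\in\Q$ by countability, observe that $f_{x,y}^{-1}(\alpha)=(1-\alpha)x+\alpha y$, and then use Fubini together with $\lambda(J)=0$ to conclude. The paper is somewhat terser (it does not separate out the cases $\alpha\in\{0,1\}$ explicitly), but the substance is identical.
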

\begin{proof}
Since $\Q$ is countable, it is enough to fix $\alpha\in\Q$ and to show that
\begin{itemize}
\item[(i)] For $\lambda\times\lambda$-almost every $(x,y)\in\R^2$, we have $f_{x,y}^{-1}(\alpha)\notin J$.
\item[(ii)] For $\mu\times\lambda$-almost every $(x,y)\in\R^2$, we have $f_{x,y}^{-1}(\alpha)\notin J$ unless $\alpha = 0$.
\end{itemize}
Now (i) follows from the fact that $f_{x,y}^{-1}(\alpha)$ is $\lambda$-random, and $\lambda(J) = 0$. (ii) follows by Fubini's theorem since for $x$ fixed, $f_{x,y}^{-1}(\alpha)$ is $\lambda$-random unless $\alpha = 0$.
\end{proof}

Philosophically, (i) says that the image of $J$ under a random similarity does not intersect $\Q$, and (ii) says that if the image of $J$ under a similarity does intersect $\Q$, then the probability that it does so a second time is zero. Next, we consider the question of whether an image of $J$ which intersects the rationals twice is likely to intersect them a third time. We will assume that the pair of points where $J$ intersects the preimage of $\Q$ is a $\mu\times\mu$-random pair $(x,y)$. Then the similarity must be of the form $T\circ f_{x,y}$, where $T$ preserves the rationals. In particular, $T\circ f_{x,y}(J)$ intersects the rationals a third time if and only if $f_{x,y}(J)$ does, so we will consider only the set $f_{x,y}(J)$ for simplicity.

\begin{theorem}
\label{theoremrandomsimilarity}
Let $J$ be a $\times d$-invariant set. Then for $\mu\times\mu$-almost every $x,y\in J$,
\[
f_{x,y}(J)\cap\Q = \{0,1\}.
\]
In particular, $f_{x,y}(J)$ does not intersect densely with $\Q$.
\end{theorem}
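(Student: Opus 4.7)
My plan is to reduce, by countable additivity over $\alpha \in \Q \setminus \{0, 1\}$ and the identity $f_{x,y}^{-1}(\alpha) = (1-\alpha) x + \alpha y$, to showing that for each fixed $\alpha = p/q \in \Q \setminus \{0, 1\}$ in lowest terms, the set
\[
A_\alpha := \{(x, y) \in J \times J : (1-\alpha) x + \alpha y \in J\}
\]
has $\mu \times \mu$-measure zero.

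For a fixed such $\alpha$, I would run a cylinder-counting argument at scale $d^{-n}$. Cover $J \times J$ by the $|E|^{2n} = d^{2n\delta}$ product cylinders $J_{w_1} \times J_{w_2}$ ($w_i \in E^n$), each of $\mu\times\mu$-mass $\asymp d^{-2n\delta}$. A pair meets $A_\alpha$ only if the interval $(1-\alpha) J_{w_1} + \alpha J_{w_2}$ --- which is an $O(d^{-n})$-ball about the ``center'' $c_{w_1, w_2} := (1-\alpha) u_{w_1}(0) + \alpha u_{w_2}(0)$ --- intersects $J$, equivalently, $c_{w_1, w_2}$ lies in the $O(d^{-n})$-thickening $N_n(J)$ of $J$. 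Ahlfors regularity gives $\mathrm{Leb}(N_n(J)) \asymp d^{-n(1-\delta)}$. The key structural observation is that every center lies on the rational lattice $(qd^n)^{-1}\Z$, so the number of admissible centers is $O(qd^{n\delta})$.

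The next step is to bound the fiber $\#\{(w_1, w_2) : c_{w_1, w_2} = c\}$ for each admissible $c$. Fixing $c$, the relation $(q-p) k_1 + p k_2 = q d^n c$ (with $k_i := d^n u_{w_i}(0) \in K_n := \{d^n u_w(0) : w \in E^n\}$) determines $k_2$ from $k_1$ along an arithmetic progression, and requiring both $k_i \in K_n$ (a set of size $d^{n\delta}$ within $\{0, \ldots, d^n\}$) imposes two density-$d^{-n(1-\delta)}$ constraints. The ``expected'' fiber size is therefore $\asymp d^{n(2\delta-1)}$, giving a total bad-pair count of $O(qd^{n(3\delta-1)})$, so
\[
(\mu \times \mu)(A_\alpha) \lesssim q d^{n(3\delta-1)} / d^{2n\delta} = q d^{-n(1-\delta)} \to 0
\]
as $n \to \infty$, since $\delta < 1$.

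The main obstacle is making the expected fiber-size bound uniform in $c$, because pathological self-correlations of $K_n$ could in principle concentrate the pairs on a small set of centers. I would attempt to control this either by exploiting the self-similar recursion $K_n = E + d K_{n-1}$ to obtain estimates by induction, or by a second-moment argument $\sum_c \#\{(w_1, w_2) : c_{w_1, w_2} = c\}^2 = \sum_m R_{pm} R_{-(q-p)m}$, with $R_a := \#\{(w, w') \in (E^n)^2 : k(w) - k(w') = a\}$ the autocorrelation of $K_n$, which in turn can be bounded by Ahlfors regularity. A fallback would be to show that the push-forward $(T_\alpha)_*(\mu \times \mu) = [(1-\alpha)\mu] * [\alpha\mu]$ is absolutely continuous with respect to Lebesgue measure on $\R$, giving $(\mu \times \mu)(A_\alpha) = 0$ immediately since $\mathrm{Leb}(J) = 0$.
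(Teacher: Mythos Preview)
Your reduction to a fixed $\alpha \in \Q \setminus \{0, 1\}$ and the set $A_\alpha$ is correct and matches the paper. After that the approaches diverge completely.

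The paper does not count cylinders. It first locates a single pair $(x_0, y_0) \in J^2$ with $(1-\alpha)x_0 + \alpha y_0 \in [0,1] \setminus J$, reads off a finite digit block $(\omega^{(0)}_1,\ldots,\omega^{(0)}_N;\tau^{(0)}_1,\ldots,\tau^{(0)}_N)$ from the base-$d$ expansions of $x_0,y_0$, and shows (via an auxiliary IFS satisfying the open set condition, together with a pigeonhole choice of a further parameter $M$) that whenever the random pair $(\omega,\tau)$ displays a suitable pattern on a block of length $MN$, the corresponding block of $z=(1-\alpha)x+\alpha y$ is forced into a gap of $J$. Since the successive $MN$-blocks of $(\omega,\tau)$ are independent under $\mu\times\mu$ and each block succeeds with probability at least $|E|^{-2MN}>0$, Borel--Cantelli gives $z\notin J$ almost surely. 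This is elementary and sidesteps all the additive-combinatorics issues you raise.

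Your counting route has a genuine gap, as you acknowledge: the ``expected'' fiber bound $F(c)\lesssim d^{n(2\delta-1)}$ is exactly the hard part, and the second-moment expression $\sum_{m}R_{pm}R_{-(q-p)m}$ involves the autocorrelation of $K_n$ at two commensurate scales, which is a resonance problem for self-similar sets and not obviously controlled by Ahlfors regularity alone. More importantly, your fallback is \emph{false}: for the ternary Cantor set and $\alpha=1/2$, if $X,Y$ are independent with law $\mu$ then $(X+Y)/2=\sum_n \tfrac{a_n+b_n}{2}\,3^{-n}$ has i.i.d.\ ternary digits with distribution $(1/4,1/2,1/4)$, which by the strong law of large numbers is mutually singular with Lebesgue measure. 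So $[(1-\alpha)\mu]*[\alpha\mu]$ need not be absolutely continuous. The weaker statement you actually need, that this convolution assigns mass zero to $J$, happens to hold in this toy case (each digit equals $1$ with probability $1/2$), but for general rational $\alpha$ the carries in $(1-\alpha)x+\alpha y$ destroy digit independence, and establishing it is essentially the original problem; the paper's block/Borel--Cantelli construction is precisely the device that tames the carries.
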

\begin{proof}
It is enough so show that for a fixed $\alpha\in\R\butnot\{0,1\}$, for $\mu\times\mu$-almost every $x,y\in J$ we have $\alpha\notin f_{x,y}(J)$, or equivalently
\begin{equation}
\label{notinJ}
f_{x,y}^{-1}(\alpha) = (1 - \alpha) x + \alpha y \notin J.
\end{equation}
\begin{observation}
There exist $x_0,y_0\in J$ so that $\alpha x_0 + (1 - \alpha) y_0 \in [0,1]\butnot J$.
\end{observation}
\begin{proof}
If $0 < \alpha < 1$, let $(x_0,y_0)$ be a maximal interval contained in the complement of $J$. Then $x_0,y_0\in J$, but $\alpha x_0 + (1 - \alpha) y_0\in [0,1]\butnot J$.

If $\alpha < 0$, then let $(w_0,x_0)$ be a maximal interval contained in the complement of $J$. Then there exist $y_0\in J$ with $y_0 > x_0$ and $|y_0 - x_0|$ arbitrarily small. For such $y_0$, we have $\alpha x_0 + (1 - \alpha) y_0 < x_0$, and if $|y_0 - x_0|$ is small enough then $\alpha x_0 + (1 - \alpha) y_0 > w_0$, which implies $\alpha x_0 + (1 - \alpha) y_0\in [0,1]\butnot J$.

The case $\alpha > 1$ is similar to the case $\alpha < 0$.
\end{proof}
Let $\omega^{(0)},\tau^{(0)}\in E^\N$ be such that $x_0 = \sum_{n\in\N}\omega_n^{(0)} d^{-n}$ and $y_0 = \sum_{n\in\N}\tau_n^{(0)} d^{-n}$. Then
\[
z_0 := \alpha x_0 + (1 - \alpha) y_0 = \sum_{n\in\N} (\alpha \omega_n^{(0)} + (1 - \alpha) \tau_n^{(0)}) d^{-n} \in [0,1]\butnot J.
\]
Let $N$ be large enough so that $d^{-N} < \dist(z_0,J)$. Then
\[
[z_0',z_0' + d^{-N}]\cap J = \emptyset \textup{ where } z_0' = \sum_{n\leq N} (\alpha \omega_n^{(0)} + (1 - \alpha) \tau_n^{(0)}) d^{-n}.
\]
In particular, the iterated function system generated by the maps $(u_\omega)_{\omega\in E^N}$ together with the map
\[
u_0(t) = z_0' + d^{-N} t
\]
satisfies the open set condition (the open set is any sufficiently small neighborhood of $[0,1]$). Let $M$ be large enough so that
\begin{equation}
\label{morethan2times}
(\#(E^N) + 1)^M > 4\#(E^N)^M.
\end{equation}
Let $\omega,\tau\in E^\N$ be $\mu$-random, and let $x = \sum_{n\in\N}\omega_n d^{-n}$, $y = \sum_{n\in\N}\tau_n d^{-n}$. Then
\[
(1 - \alpha) x + \alpha y = \sum_{n\in\N} (\alpha \omega_n + (1 - \alpha) \tau_n) d^{-n}.
\]
For each $n\in\N$ let
\[
z_n = \sum_{i\leq MNn} (\alpha \omega_i + (1 - \alpha) \tau_i) d^{-i}.
\]
\begin{claim}
For each $n\in\N$,
\[
\prob\left([z_n,z_n + d^{-MNn}]\cap J = \emptyset \given (\omega_i,\tau_i)_{i = 1}^{MN(n - 1)} \right) \geq \#(E)^{-2MN}.
\]
\end{claim}
\begin{subproof}
Fix $(\omega_i,\tau_i)_{i = 1}^{MN(n - 1)}$. To prove the claim, we need to show that there exists $(\omega_i,\tau_i)_{i = MN(n - 1) + 1}^{MNn}$ such that
\begin{equation}
\label{interval}
\left[z_{n - 1} + \sum_{i = MN(n - 1) + 1}^{MNn} (\alpha \omega_i + (1 - \alpha) \tau_i) d^{-i},\cdots + d^{-MNn}\right]
\end{equation}
is disjoint from $J$. (The lower bound on the probability comes from the fact that there are only $\#(E)^{2MN}$ possible sequences $(\omega_i,\tau_i)_{i = MN(n - 1) + 1}^{MNn}$.) 

Consider the set of all sequences $(\omega_i,\tau_i)_{i = MN(n - 1) + 1}^{MNn}$ such that for each $j = 1,\ldots,M$, either $\omega_i = \tau_i$ for all $i = MN(n - 1) + N(j - 1) + 1,\ldots, MN(n - 1) + Nj$, or
\[
(\omega_{MN(n - 1) + N(j - 1) + i},\tau_{MN(n - 1) + N(j - 1) + i}) = (\omega_i^{(0)},\tau_i^{(0)}) \text{ for } i = 1,\ldots,N.
\]
The number of such sequences is clearly $(\#(E^N) + 1)^M$. On the other hand, since the IFS $(u_\omega)_{\omega\in E^\N}\cup(u_0)$ satisfies the open set condition, the intervals \eqref{interval} are disjoint for $(\omega_i,\tau_i)_{i = MN(n - 1) + 1}^{MNn}$ in this collection. Thus, each $MNn$-level interval for $J$ can intersect at most two of these intervals; moreover, only intervals which intersect the interval $[z_{n - 1},z_{n - 1} + d^{-MN(n - 1)}]$ can intersect any of these intervals. There are at most $2\#(E^{MN})$ such intervals, so by \eqref{morethan2times}, one of these intervals is disjoint from every $MNn$-level interval of $J$, which implies that it is disjoint from $J$.
\end{subproof}
By the Borel-Cantelli lemma, for $\mu\times\mu$-almost every pair $(x,y)$, we have
\[
[z_n,z_n + d^{-MNn}]\cap J = \emptyset
\]
for infinitely many $n\in\N$. In particular, $z = (1 - \alpha) x + \alpha y$ is contained in this interval, so $z\notin J$.
\end{proof}

Although the image of $J$ under a random affine transformation does not have dense intersection with $\Q$, the story is different if we consider only translations. For each $x\in\R$, let $f_x$ be the unique translation so that $f_x(x) = 0$, i.e. $f_x(t) = t - x$, and let $J_x = f_x(J)$.

\begin{observation}
\label{observationrandomtranslation}
For $\lambda$-almost every $x\in\R$, $J_x\cap\Q = \emptyset$.
\end{observation}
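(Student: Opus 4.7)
The plan is to reduce the statement to the fact that $J$ is a Lebesgue null set and then apply countable subadditivity. First I would rewrite the event of interest: the condition $J_x \cap \Q \neq \emptyset$ says there exists a rational $q$ with $q \in J - x$, equivalently $x + q \in J$, equivalently $x \in J - q$. So
\[
\{x \in \R : J_x \cap \Q \neq \emptyset\} \;=\; \bigcup_{q \in \Q} (J - q).
\]

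Next I would observe that $J$ is Lebesgue null. This is because $J$ is a $\times d$-invariant set built from the digit alphabet $E \subsetneq \{0, \ldots, d-1\}$ with $\#(E) < d$, so its Hausdorff dimension is $\delta = \log \#(E)/\log d < 1$; in particular $\lambda(J) = 0$. (Alternatively, one checks directly that $J$ is covered by $\#(E)^n$ intervals of length $d^{-n}$, giving Lebesgue measure at most $(\#(E)/d)^n \to 0$.) Since Lebesgue measure is translation invariant, $\lambda(J - q) = 0$ for every $q \in \Q$, and by countable subadditivity
\[
\lambda\Bigl(\bigcup_{q \in \Q}(J - q)\Bigr) \;\leq\; \sum_{q \in \Q} \lambda(J - q) \;=\; 0.
\]
Thus the complement of this union has full Lebesgue measure, which is exactly the statement that $J_x \cap \Q = \emptyset$ for $\lambda$-almost every $x \in \R$.

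There is no real obstacle here; the result is a one-line consequence of countability of $\Q$ and the nullity of $J$, and is included mainly for contrast with the translation-invariance behavior explored in Lemma \ref{dense}.
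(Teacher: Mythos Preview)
Your proof is correct and is essentially the same as the paper's: both fix a rational $\alpha$, observe that $f_x^{-1}(\alpha)=x+\alpha\in J$ forces $x\in J-\alpha$, and conclude from $\lambda(J)=0$ together with countability of $\Q$. You simply spell out the countable subadditivity and the reason $\lambda(J)=0$ more explicitly than the paper does.
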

\begin{proof}
It suffices to show that for each $\alpha\in\R$, the probability that $f_x^{-1}(\alpha)\in J$ is zero; this is obvious since $\lambda(J) = 0$.
\end{proof}

However, unlike the situation with a random similarity, if a translation of $J$ intersects $\Q$ in at least one point, then it intersects $\Q$ densely:

\begin{lemma}
\label{dense}
For all $x\in\R$, if $\Q\cap J_x$ is nonempty, then $\Q\cap J_x$ is dense in $J_x$. Furthermore, there exists $K < \infty$ such that every point $y\in J_x$ is intrinsically approximable with respect to the constant function
\begin{equation}
\label{Kq}
\psi(q) = K.
\end{equation}
\end{lemma}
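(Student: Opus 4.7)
The plan is to use a digit-splicing construction in the base-$d$ expansion. I would fix a reference rational $p/q \in \Q \cap J_x$ given by the hypothesis, set $z_0 := p/q + x \in J$, and let $\omega^{(0)} \in E^\N$ be its base-$d$ expansion (equivalently, a coding under $\pi$). For any target $y \in J_x$, I would write $z := y + x \in J$ with coding $\omega \in E^\N$, and for each $n \in \N$ form the spliced point
\[
z_n' := \pi(\omega_1^n \concat \sigma^n \omega^{(0)}) \in J.
\]
A direct computation will yield
\[
z_n' - z_0 = \sum_{i=1}^n (\omega_i - \omega_i^{(0)}) d^{-i} \in d^{-n}\Z,
\]
while the digit bound $|\omega_i - \omega_i^{(0)}| \leq d-1$ gives the tail estimate $|z_n' - z| \leq d^{-n}$.

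Setting $p_n'/q_n' := z_n' - x = p/q + (z_n' - z_0)$ then produces a rational in $J_x$ whose denominator $q_n'$ divides $q d^n$, with
\[
|y - p_n'/q_n'| \;=\; |z - z_n'| \;\leq\; d^{-n} \;\leq\; \frac{q}{q_n'}.
\]
Hence $K := q$ provides the desired uniform bound, and since $p_n'/q_n' \to y$ as $n \to \infty$, the density assertion $\cl{\Q \cap J_x} = J_x$ drops out for free.

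The remaining point is to exhibit infinitely many \emph{distinct} approximants. The partial sums $z_n' - z_0$ jump at index $n$ exactly when $\omega_n \neq \omega_n^{(0)}$, so distinctness for infinitely many $n$ is equivalent to $\omega$ and $\omega^{(0)}$ disagreeing at infinitely many positions. This is automatic whenever $y$ is irrational, since then $y - p/q$ is irrational and cannot coincide with a finite truncated sum of $(\omega_i - \omega_i^{(0)}) d^{-i}$. For rational $y$ the two codings could agree from some position onward, in which case the splice eventually yields only $y$ itself; in that subcase I would pick a different reference $p''/q'' \in \Q \cap J_x$ (available by the density already proved) whose coding disagrees with $\omega$ infinitely often, and rerun the argument.

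The main obstacle is this final step: verifying that a suitable alternative reference always exists, i.e. ruling out the pathological possibility that every rational in $\Q \cap J_x$ has a coding sharing a tail with $\omega$. The bad references are those $p/q$ for which $y - p/q$ admits a representation of the form $\sum_{i\leq N}(\omega_i - \omega_i^{(0)})d^{-i}$ with $\omega_i, \omega_i^{(0)} \in E$; since $E$ is finite and no carrying is allowed beyond position $N$, this is a very restrictive countable set. I would expect to rule it out by exploiting the self-similar structure of $J$, for instance by altering a periodic block in an eventually periodic coding to produce a rational in $\Q \cap J_x$ whose tail differs from $\omega$'s, thereby stepping off the bad set.
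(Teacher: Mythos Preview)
Your digit-splicing construction is exactly the paper's proof: concatenate the first $n$ base-$d$ digits of $y+x\in J$ with the tail of $p_0/q_0+x\in J$, observe that the result lies in $J$, differs from $p_0/q_0+x$ by an element of $d^{-n}\Z$, and is within $d^{-n}$ of $y+x$; the paper likewise takes $K=q_0$. Your further discussion of exhibiting infinitely many \emph{distinct} approximants when $y$ is rational goes beyond the paper's own proof, which stops at the inequality $|y-(p_0/q_0+r_n)|\leq q_0/\denom(p_0/q_0+r_n)$ and does not address that point, so the residual issue you flag is present in the original as well rather than a defect particular to your argument.
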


\begin{proof}
Fix $y\in J_x$ and $p_0/q_0\in\Q\cap J_x$. For each $n\in\N$, let $z_n$ be the number whose base $d$ expansion is found by concatenating the first $n$ digits of $x + y \in J$ with the remaining digits of $x + p_0/q_0\in J$, starting from the $(n + 1)$st. We observe that $z_n\in J$, since the digits of the base $d$ expansion of $z_n$ lie in $E$. Then $r_n := z_n - x - p_0/q_0$ is a multiple of $1/d^n$. In particular, $p_0/q_0 + r_n = z_n - x \in \Q\cap J$ and
\[
\denom(p_0/q_0 + r_n)\leq q_0 d^n.
\]
Now
\[
|y - (p_0/q_0 + r_n)| = |z_n - (x + y)| \leq d^{-n}
\]
since $z$ agrees with $(x + y)$ up to the first $n$ digits. Thus
\[
|y - (p_0/q_0 + r_n)| \leq \frac{q_0}{\denom(p_0/q_0 + r_n)},
\]
proving the lemma with $K = q_0$.
\end{proof}

For the remainder of this section, we will consider a set of the form $J_x$ where $x\in J$. Such a set is representative of a translate of $J$ intersecting $\Q$, because any intersection of $\Q$ with $J$ can be translated to $0$.

We now discuss optimality of the approximation function (\ref{Kq}). We consider the following case:
\begin{enumerate}[i)]
\item $d$ is prime
\item The set $E$ of allowable digits contains no two adjacent integers, and contains both $0$ and $(d - 1)$.\comdavid{Added, since we are using it in the proof.}
\end{enumerate}
We remark that (ii) implies that the coding map $\pi:E^\N\to J$ is injective.

Let $J$ satisfy (i)-(ii), and let $x\in J$ be a $\mu$-random point. Then the approximation function (\ref{Kq}) is optimal in the following two senses:
\begin{itemize}
\item The set of badly intrinsically approximable numbers is of full Hausdorff dimension (Theorem \ref{theoremcase0ba})
\item A Khinchin-type theorem holds (Theorem \ref{theoremcase0khinchin})
\end{itemize}

To prove these results, we begin with the following theorem:
\begin{theorem}
\label{exceptions}
Suppose that $J$ is a $\times d$-invariant limit set satisfying \textup{(i)-(ii)}. If $x\in J$ is a $\mu$-random point, then
\begin{itemize}
\item[iii)] Every rational in $J_x$ is of the form $p/d^n$ for some $p,n\in\N$.
\end{itemize}
\end{theorem}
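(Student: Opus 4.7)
The plan is to show that for each rational $r$ whose reduced denominator is not a power of $d$, the set $A_r := \{x \in J : x + r \in J\}$ satisfies $\mu(A_r) = 0$; the theorem then follows by countability of $\Q$. After reducing to $r \in (0,1)$, the hypothesis on $r$ forces the base-$d$ expansion $r = 0.b_1 b_2 \cdots$ to be eventually periodic with period $L \geq 1$ and periodic part neither identically $0$ nor identically $d - 1$ (otherwise $r$ would be of the form $p/d^n$). For $x \in A_r$, writing $x = 0.x_1 x_2 \cdots$ and $x + r = 0.s_1 s_2 \cdots$ with all digits in $E$, digit-wise addition gives the relation $x_n + b_n + c_n = s_n + d\,c_{n-1}$, where $c_n \in \{0,1\}$ is the carry from position $n+1$ into $n$; thus $x \in A_r$ is equivalent to requiring $s_n \in E$ for every $n$. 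Under $\mu$ the digits $(x_n)$ are i.i.d.\ uniform on $E$.

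The next ingredient is a combinatorial lemma: since $d$ is prime, $|E| < d$, and $E$ contains no two adjacent integers, for every $k \not\equiv 0 \pmod d$ the set $\{a \in E : (a + k) \bmod d \in E\}$ is a proper subset of $E$. Equality would make $E$ closed under translation by $k$, i.e.\ a union of cosets of the cyclic group $\langle k \rangle$; primality of $d$ forces $\langle k \rangle = \Z/d\Z$, hence $E = \Z/d\Z$, contradicting $|E| < d$. Consequently, at any position $n$ with $b_n + c_n \not\equiv 0 \pmod d$ the constraint $s_n \in E$ restricts $x_n$ to a proper subset of $E$, so the conditional probability of survival at such a position is bounded above by some $\rho < 1$.

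The main obstacle is controlling the carry sequence, since each $c_n$ depends on the entire tail of $(x_n)$. I would view the carry as a two-state Markov chain in reverse time (indexed by decreasing $n$) with the $(x_n)$ as i.i.d.\ inputs, and call a transition \emph{trivial} if $b_n + c_n \equiv 0 \pmod d$. A direct computation confirms that trivial transitions preserve the carry value: $(b,c) = (0,0)$ produces outgoing carry $0$ and $(b,c) = (d-1,1)$ produces outgoing carry $1$. Hence an infinite run of trivial transitions would force the carry to be eventually constant and $(b_n)$ to be eventually constant in $\{0, d-1\}$, contradicting our assumption on $r$. For each starting carry $c \in \{0,1\}$, the reverse-time chain run for $L$ steps (one period) therefore has survival probability strictly less than $1$: locate the first position in the period where the trivial pattern must break, and apply the combinatorial lemma there. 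Letting $M$ be the $2 \times 2$ sub-stochastic matrix recording joint survival-and-transition probabilities over one period, both row sums of $M$ are then strictly below $1$, so $\|M\|_\infty < 1$; iterating over $k$ periods (past the pre-periodic part) and conditioning on the carry at the truncation point gives $\mu(A_r) \leq \|M\|_\infty^k \to 0$. The principal subtlety is that the first non-trivial position within a period depends on the sample trajectory's carry value, so the survival bound per period must be established uniformly in the starting carry rather than pointwise in the sample path.
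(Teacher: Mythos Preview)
Your argument is correct, but it proceeds quite differently from the paper's. The paper first establishes the almost-sure property that every finite $E$-word occurs as a subword of the expansion of $x$; then, for a fixed rational $r=p/q$ whose reduced denominator is not a power of $d$, it constructs by hand a single finite pattern $\tau$ (of the form $(a\,0^m)^{n+m}$ or $(a\,(d-1)^m)^{n+m}$) whose occurrence in $x$ forces a digit of $x+r$ to leave $E$. The string of $0$'s (respectively $(d-1)$'s) is there precisely to kill (respectively force) the carry, so that at the designated position one knows exactly whether the relevant shift is $k$ or $k+1$; condition~(ii) is then invoked to guarantee that one of $a+k$, $a+k+1$ lies outside $E$. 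Your approach instead tracks the carry explicitly as a two-state sub-Markov chain and shows exponential decay of the survival probability via a sub-stochastic transfer matrix over one period. The trade-off is this: the paper's proof is shorter and entirely elementary, but it genuinely needs both parts of condition~(ii) --- the ``no two adjacent digits'' clause to produce the dichotomy $a+k\notin E$ or $a+k+1\notin E$, and the clause $0,d-1\in E$ to build the carry-resetting blocks. Your combinatorial lemma, by contrast, uses only that $d$ is prime and $\#E<d$ (the translation-closure argument does not invoke the adjacency restriction at all), and your Markov-chain bound never requires $0$ or $d-1$ to lie in $E$; so your method in fact proves a strictly stronger statement than the one in the paper, at the cost of a somewhat more delicate conditioning argument on the carry at the truncation point.
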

\begin{proof}
Suppose that $x\in J$ is a $\mu$-random point, and let $x = \sum_{i = 1}^\infty a_i d^{-i}$ be the base $d$ expansion of $x$. Then the sequence $(a_i)_i$ is an independent and identically distributed sequence of random variables whose common distribution is the uniform distribution on $E$. By the Law of Large Numbers, every finite word in $E^*$ is a subword of the infinite word $(a_i)_i$.

By way of contradiction, suppose that there exists a rational $p/q\in\Q\cap J_x$ whose denominator is not a power of $d$. Without loss of generality assume $p/q > 0$; the case $p/q < 0$ is similar. Let $p/q = \sum_{i = 1}^\infty b_i d^{-i}$ be the base $d$ expansion of $p/q$, and let $x + p/q = \sum_{i = 1}^\infty c_i d^{-i} \in J$. Then $c_i = a_i + b_i$ or $c_i = a_i + b_i + 1$, mod $d$, depending on whether there is a carry from the lower level terms. Write
\[
p/q = .b_1\ldots b_{n + m}b_{n + 1}\ldots \hspace{1 in}\text{(in base $d$)}
\]
for some $n,m\in\N$. Let $k = b_{n + m}$. Fix $a\in E\butnot\{d - 1 - k\}$. Then by condition (ii), either $a + k \notin E$, or $a + k + 1 \notin E$ (mod $d$). Without loss of generality suppose that $a + k \notin E$. Let $\tau = (a 0^m)^{n + m}$. (If $a + k + 1\notin E$ we take $\tau = (a (d - 1)^m)^{n + m}$.) By assumption $\tau$ is a substring of $(a_i)_i$. Thus we can find $a 0^m$ as a substring of $(a_i)_i$ such that the $a$ corresponds to an occurence of $k$ in $p/q$, i.e. there exists $i\in\N$ such that
\begin{align*}
a_{n + mi} &= a\\
a_{n + mi + j} &= 0
\end{align*}
for all $j = 1,\ldots,m$. Now since $q$ is not a power of $d$, we have $b_{n + j}\neq d - 1$ for some $j = 1,\ldots,m$. Thus the zeros $(a_{n + mi + j})_{j = 1}^m$ are sufficient to ensure that there is not a carry in the $(n + mi)$th place, implying $c_{n + mi} = a_{n + mi} + b_{n + mi} = a + k \notin E$, a contradiction. Thus $p/q$ does not exist.
\end{proof}

\begin{lemma}
\label{lemmacase0ba}
Suppose that $J$ satisfies \textup{(i)-(ii)}, and let $x = \pi(\omega)\in J$ be a $\mu$-random point. Then for every $\psi:\N\rightarrow(0,\infty)$, a point $y = \pi(\tau) - x \in J_x$ is badly intrinsically approximable with respect to $\psi$ if and only if there exists $K < \infty$ such that for every pair $(n,r)\in\N^2$ satisfying $\omega_{n + 1}^{n + r} = \tau_{n + 1}^{n + r}$, we have
\[
r\leq K + \Psi(n).
\]
Here we use the notation
\begin{equation}
\label{Psidefcase0}
\Psi(n) := -\log_d(\psi(d^n)).
\end{equation}
\end{lemma}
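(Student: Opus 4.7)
The plan is to set up a dictionary between rationals in $\Q\cap J_x$ and words in $E^\N$, and then translate the Diophantine condition on $y$ into the combinatorial one on the pair $(\omega,\tau)$. By Theorem \ref{exceptions} every rational in $\Q\cap J_x$ has the form $p/d^n$, and writing $p/d^n+x=\pi(\tau')$ shows that adding a $d$-adic rational cannot affect digits past position $n$, so $\tau'_i=\omega_i$ for all $i>n$. Since $d$ is prime, the reduced denominator of $p/d^n$ is exactly $d^{i^*}$, where $i^*\leq n$ is the largest index with $\tau'_{i^*}\neq\omega_{i^*}$. Conversely, any $\tau'\in E^\N$ that eventually agrees with $\omega$ yields a rational of $\Q\cap J_x$. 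Condition (ii) (no adjacent digits) guarantees that neighbouring cylinders are separated by a gap comparable to their diameter, so $|\pi(\tau)-\pi(\tau')|\asymp d^{-L}$ whenever $L$ is the length of the longest common prefix of $\tau$ and $\tau'$.

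For the forward direction, suppose $y$ is badly approximable with constant $\varepsilon$. Given $(n,r)$ with $\omega_{n+1}^{n+r}=\tau_{n+1}^{n+r}$, form the candidate word $\tau' := \tau_1\cdots\tau_n\,\omega_{n+1}\omega_{n+2}\cdots$ and the corresponding rational $p/q=\pi(\tau')-x$. By construction $\tau$ and $\tau'$ agree on $[1,n]$ and (through $\omega$) also on $[n+1,n+r]$, so their longest common prefix has length at least $n+r$ and $|y-p/q|\lesssim d^{-(n+r)}$. Simultaneously $q\leq d^n$. Combining these with $|y-p/q|\geq\varepsilon\psi(q)/q$ and taking logarithms gives $r\leq K+\Psi(n)$ for a universal $K$, provided $q\mapsto\psi(q)/q$ is nonincreasing on powers of $d$---a mild monotonicity that holds automatically in the applications of this lemma, most importantly when $\psi$ is constant.

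For the reverse direction, assume the symbolic condition for some $K$. Taking $r=0$ forces $\Psi(n)\geq-K$ for every $n$, i.e.\ $\psi(d^n)\leq d^K$, so $\psi$ is bounded. Now pick any $p/q\in\Q\cap J_x$, write $q=d^{i^*}$, and let $\tau'$ and $L$ be the associated word and longest-common-prefix length. If $L\leq i^*$, then $|y-p/q|\asymp d^{-L}\geq 1/q$, and boundedness of $\psi$ yields $|y-p/q|\gtrsim d^{-K}\psi(q)/q$. If $L>i^*$, then $\tau_i=\tau'_i=\omega_i$ for $i^*<i\leq L$, so the pair $(i^*,L-i^*)$ satisfies the agreement hypothesis, and the symbolic bound gives $L-i^*\leq K+\Psi(i^*)$; rearranging yields $|y-p/q|\gtrsim d^{-K}\psi(q)/q$. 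Either way, $y$ is badly approximable.

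The main obstacle is the bookkeeping between the cut point $n$ and the reduced denominator exponent $i^*=i^*(n)$, which can be strictly smaller than $n$ whenever $\tau$ and $\omega$ happen to agree at position $n$. The key trick is to apply the symbolic hypothesis in the reverse direction at the pair $(i^*,L-i^*)$ rather than at a naive cut point, and to harvest boundedness of $\psi$ from the $r=0$ specialization in order to handle the degenerate case $L\leq i^*$, where the chosen rational is ``wasteful''. With these two observations in place, the remaining argument is elementary logarithmic manipulation together with the cylinder-separation estimate from condition (ii).
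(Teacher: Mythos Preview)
Your proof is essentially the paper's proof, recast in direct form: both hinge on Theorem~\ref{exceptions} (rationals in $J_x$ are $d$-adic), the separation estimate $|\pi(\tau)-\pi(\tau')|\asymp d^{-L}$ coming from condition~(ii), and the glued word $\tau_1^n*\omega_{n+1}^\infty$ as the bridge between the Diophantine and symbolic sides. The paper argues both implications by contrapositive while you argue directly, but the underlying constructions coincide. You are in fact more scrupulous than the paper about the gap between the cut point $n$ and the reduced-denominator exponent $i^*$: in your forward direction you invoke monotonicity of $\psi(q)/q$ to pass from $q=d^{i^*}$ to $d^n$, whereas the paper's corresponding step (its second paragraph) simply asserts $|y-p/q|\leq\varepsilon\psi(q)/q$ without checking whether the reduced $q$ equals $d^n$. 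So your caveat is honest rather than a defect relative to the paper. Your reverse direction, applying the symbolic hypothesis at the anchored pair $(i^*,L-i^*)$, is a clean way to handle this bookkeeping and is arguably more transparent than the paper's contrapositive.

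One small slip: the $r=0$ specialization you use to force $\psi$ bounded presumes $0\in\N$. The lemma quantifies over $(n,r)\in\N^2$, and if $\N=\{1,2,\dots\}$ there may be no agreement pairs whatsoever (take $\omega,\tau$ disagreeing at every position), so the symbolic condition can hold vacuously without bounding $\psi$. This is harmless for the applications in the paper, where $\psi$ is bounded anyway, but the deduction as written is not available in general.
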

\begin{proof}
Suppose that $y$ is intrinsically well approximable with respect to $\psi$. Then for all $\varepsilon > 0$ there exist infinitely many rational approximations $p/q = \pi(\eta) - x \in\Q\cap J_x$ satisfying $|y - p/q| \leq \varepsilon\psi(q)/q$. By condition (iii), $p/q$ can be written in the form $p/d^n$ for some $p,n\in\N$. On the other hand, since $d$ is prime, it follows that the reduced form of the fraction $p/d^n$ is also of the form $p/d^n$ (possibly with different $p,n$). In other words, $\qred = d^n$ for some $n\in\N$. Now since $\pi(\eta) - \pi(\omega) = p/q$, we have that $\eta$ agrees with $\omega$ except for the first $n$ digits. On the other hand, we have $|\pi(\tau) - \pi(\eta)| \leq \varepsilon\psi(d^n)/d^n$, which implies that $\tau$ and $\eta$ agree on the first $\lfloor\log_{1/d}(\varepsilon\psi(d^n)) + n\rfloor - 1$ digits (here we are using condition (ii)). Thus $\omega_{n + 1}^{n + r} = \tau_{n + 1}^{n + r}$, where $r = \lfloor\log_{1/d}(\varepsilon\psi(d^n))\rfloor - 1$. Thus for all $K < \infty$, there exist infinitely many pairs $(n,r)\in\N^2$ such that $\omega_{n + 1}^{n + r} = \tau_{n + 1}^{n + r}$ but $r\geq K + \log_{1/d}(\psi(d^n))$.

On the other hand, suppose that for all $K < \infty$, there exist infinitely many such pairs. For each pair $(n,r)$, if we define $\eta$ to be the string which agrees with $\tau$ for the first $n$ digits but then agrees with $\omega$, we find that the rational approximation $p/q := \pi(\eta) - x \in \Q\cap J_x$ satisfies $|y - p/q| \leq \varepsilon\psi(q)/q$.
\end{proof}

As an immediate consequence, we get the optimality of the Dirichlet function $\psi(q) = 1$:

\begin{theorem}[Optimality]
\label{theoremcase0ba}
Suppose that $J$ satisfies \textup{(i)-(ii)}, and let $x\in J$ be a $\mu$-random point. Then the set of numbers $y\in J_x$ which are badly intrinsically approximable with respect to the approximation function $\psi(q) = K$ is of Hausdorff dimension $\delta$.
\end{theorem}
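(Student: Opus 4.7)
The plan is to use Lemma \ref{lemmacase0ba} to translate the badly-intrinsically-approximable condition into a purely combinatorial condition on codings, and then construct, for each integer $L \geq 2$, a Cantor-type subset of $E^\N$ whose image in $J_x$ consists of badly intrinsically approximable points and whose Hausdorff dimension tends to $\delta$ as $L\to\infty$. Since $\psi\equiv K$ is constant, the function $\Psi(n) = -\log_d(K)$ appearing in Lemma \ref{lemmacase0ba} is also constant, so that lemma simplifies to: $y = \pi(\tau) - x$ is badly intrinsically approximable if and only if there is a uniform bound on the length of any block on which $\omega$ and $\tau$ agree at matching positions.

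For each $L \geq 2$, I would define
\[
F_L := \{\tau \in E^\N : \tau_{jL+1}\neq \omega_{jL+1}\text{ for all }j\geq 0\}.
\]
Every $\tau\in F_L$ satisfies the combinatorial constraint with uniform bound $L-1$: any window of $L$ consecutive positions contains at least one index of the form $jL+1$, at which $\tau$ is forced to disagree with $\omega$. Hence $\pi(F_L) - x$ is contained in the set of badly intrinsically approximable points in $J_x$, and it will suffice to show $\dim_H\pi(F_L) \to \delta$ as $L\to\infty$.

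To bound $\dim_H\pi(F_L)$ from below, I would equip $F_L$ with the product Bernoulli measure that chooses each constrained symbol uniformly from $E\setminus\{\omega_{jL+1}\}$ and each unconstrained symbol uniformly from $E$. At level $nL$ this measure assigns mass $1/N_n$ to each of the $N_n := (\#E-1)^n (\#E)^{n(L-1)}$ cylinders of $F_L$. By condition (ii) the coding map $\pi$ is injective, level-$nL$ cylinders have diameter $\asymp d^{-nL}$, and by Ahlfors $\delta$-regularity of $J$ any ball in $\R$ of radius $r \in (d^{-(n+1)L}, d^{-nL}]$ meets at most $O(1)$ such cylinders. Pushing the measure forward by $\pi$ therefore gives $\nu_L(B(z,r)) \lesssim r^{s_L}$ with
\[
s_L := \frac{\log(\#E-1) + (L-1)\log(\#E)}{L\log d},
\]
so the mass distribution principle yields $\dim_H \pi(F_L) \geq s_L$. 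Letting $L\to\infty$ gives $s_L \to \log(\#E)/\log d = \delta$, and since $\dim_H J_x = \delta$ supplies the matching upper bound, the theorem follows.

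The only delicate step is the passage from the sliding-window agreement condition to the fixed arithmetic-progression constraint defining $F_L$; everything else reduces to a standard mass-distribution computation that tracks the Ahlfors regularity of $J$.
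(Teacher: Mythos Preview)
Your proof is correct and follows essentially the same approach as the paper. The paper defines $S_k := \{\tau\in E^\N:\tau_{kn}\neq\omega_{kn}\text{ for all }n\in\N\}$, which is the same construction as your $F_L$ up to a shift of the arithmetic progression of constrained positions, and then appeals to Hutchinson's formula for the dimension, whereas you spell out the mass distribution argument explicitly; the combinatorial reduction via Lemma~\ref{lemmacase0ba} is identical.
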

\begin{proof}
We have $\Psi(n) = 0$, so $y = \pi(\tau) - x\in J_x$ is badly approximable with respect to (\ref{Kq}) if and only if the length of the strings on which $\omega$ and $\tau$ agree is uniformly bounded. Thus for every $k\in\N$, the set
\[
S_k := \{\tau\in E^\N: \tau_{kn}\neq\omega_{kn}\all n\in\N\}
\]
is contained in the set of badly approximable points. On the other hand, it is readily computed (using e.g. Hutchinson's formula \cite{Hutchinson}) that the Hausdorff dimension of $S_k$ tends to $\delta$ as $k$ tends to infinity.
\end{proof}

Finally, using a theorem of Levesley, Salp, and Velani \cite{LSV}, we are able to prove the following theorem, which incorporates both a Khinchin-type and a Jarnik-Besicovitch-type theorem:

\begin{theorem}
\label{theoremcase0khinchin}
Suppose that $J$ satisfies \textup{(i)-(ii)}, and let $x\in J$ be a $\mu$-random point. Fix any function $\psi:\N\rightarrow(0,\infty)$. Let $f$ be a dimension function such that $t\mapsto t^{-\delta}f(t)$ is monotonic. If we denote the set of $\psi$-intrinsically well approximable points by $\WA_{\psi,\mathrm{int}}$, then
\[
\HH^f(\WA_{\psi,\mathrm{int}}) = \begin{cases}
0 & \textup{if }\sum_{n = 1}^\infty f(\psi(d^n)/d^n) (d^n)^\delta < \infty\\
\HH^f(J) & \textup{if }\sum_{n = 1}^\infty f(\psi(d^n)/d^n) (d^n)^\delta = \infty
\end{cases},
\]
where $\HH^f$ is Hausdorff $f$-measure.
\end{theorem}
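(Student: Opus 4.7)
The plan is to reduce, via Lemma \ref{lemmacase0ba}, to a limsup-set computation of the kind handled by the Khinchin--Jarn\'ik theorem of Levesley, Salp, and Velani \cite{LSV} (for approximation of points in the Cantor set by left endpoints), and then to observe that their argument extends verbatim to the slightly shifted family of rationals that appears here. Fix a $\mu$-random $x = \pi(\omega)$. By Lemma \ref{lemmacase0ba}, $y = \pi(\tau) - x$ lies in $\WA_{\psi,\mathrm{int}}$ iff for every $K \in \N$ there are infinitely many $n$ with $\tau_{n+1}^{n+r_n} = \omega_{n+1}^{n+r_n}$, where $r_n := \lceil \Psi(n) \rceil + K$ and $\Psi(n) := -\log_d \psi(d^n)$. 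Setting
\[
V_n^{(K)} := \{y = \pi(\tau) - x \in J_x : \tau_{n+1}^{n+r_n} = \omega_{n+1}^{n+r_n}\},
\]
we obtain $\WA_{\psi,\mathrm{int}} = \bigcap_K \limsup_n V_n^{(K)}$. Each $V_n^{(K)}$ is a disjoint union of $d^{\delta n}$ cylinders of level $n + r_n$, one inside each level-$n$ basic interval of $J_x$; each such cylinder has diameter $\asymp d^{-n}\psi(d^n)\, d^{-K}$ and $\mu$-measure $\asymp d^{-\delta n}\psi(d^n)^\delta d^{-\delta K}$.

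For the convergent case, the natural cover of $\limsup_n V_n^{(K)}$ by the cylinders of $V_n^{(K)}$ for $n \geq N$ has total $f$-cost at most a $K$-dependent constant times $\sum_{n \geq N} d^{\delta n} f(\psi(d^n)/d^n)$, where the monotonicity hypothesis on $t\mapsto t^{-\delta}f(t)$ is used to control the $d^{-K}$ factor inside $f$. The convergence hypothesis forces this to vanish as $N \to \infty$, giving $\HH^f(\limsup_n V_n^{(K)}) = 0$ for every $K$ and hence $\HH^f(\WA_{\psi,\mathrm{int}}) = 0$.

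For the divergent case we invoke \cite{LSV}. Their theorem treats the case $\omega = 0^\infty$, where the distinguished cylinder inside each level-$n$ basic interval is its leftmost sub-cylinder, and shows that under the divergence hypothesis the resulting limsup set has full Hausdorff $f$-measure. Their proof combines a Borel--Cantelli-type lower bound on $\mu$-mass with a ubiquity / mass transference step to lift $\mu$-regularity to $\HH^f$-regularity, and uses only (i) the fact that each target set contains exactly one level-$(n + r_n)$ cylinder per level-$n$ basic interval, and (ii) the prescribed diameter and $\mu$-mass of those cylinders. Both data are preserved when $0^\infty$ is replaced by any fixed $\omega \in E^\N$. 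Moreover the proof actually yields the stronger conclusion that $\HH^f$-almost every $y \in J_x$ satisfies $y \in \limsup_n V_n^{(K)}$ for every $K$ simultaneously, which is precisely $y \in \WA_{\psi,\mathrm{int}}$; hence $\HH^f(\WA_{\psi,\mathrm{int}}) = \HH^f(J)$.

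The main obstacle is verifying that the argument of \cite{LSV} really does factor through the cylinder structure alone, so that replacing $0^\infty$ by the fixed sequence $\omega$ is purely notational. This amounts to re-reading their mass transference / ubiquity step and checking that it never refers to the identity of the distinguished cylinder within each basic interval --- a tedious but routine bookkeeping task, and the only point at which the transfer is not entirely formal.
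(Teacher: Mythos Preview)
Your proposal is correct, and the overall shape---reduce via Lemma \ref{lemmacase0ba} to a limsup set of cylinders, then invoke \cite{LSV}---matches the paper. The divergent case is handled differently, however. You argue that the \cite{LSV} proof depends only on the cylinder structure and so carries over when $0^\infty$ is replaced by the fixed sequence $\omega$; you flag this yourself as the step requiring a careful re-reading of their argument. The paper sidesteps this entirely: it constructs an explicit bi-Lipschitz map $\widetilde\Phi\colon J_x\to J$ by choosing, for each $n$, a permutation $\Phi_n$ of $E$ with $\Phi_n(\omega_n)=0$, and setting $\widetilde\Phi(y)=\pi\bigl((\Phi_n(\tau_n))_n\bigr)$ where $y=\pi(\tau)-x$. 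This isometry of $E^\N$ sends the condition $\tau_{n+1}^{n+r}=\omega_{n+1}^{n+r}$ to the condition that $\Phi(\tau)_{n+1}^{n+r}=0^r$, so intrinsic approximation in $J_x$ becomes terminal approximation in $J$, and \cite{LSV} applies as a black box. Your route buys independence from the internal structure of the \cite{LSV} proof for the convergent half (which you do directly), at the cost of reopening that proof for the divergent half; the paper's route trades both halves for a single bi-Lipschitz conjugation.
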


\begin{corollary}
\label{corKhinchin}
If $\psi(q) = \log(q)^{-1/\delta}$, then almost every point is intrinsically well approximable with respect to $\psi$; if $\psi(q) = \log(q)^{-(1 + \varepsilon)/\delta}$, then almost every point is badly intrinsically approximable with respect to $\psi$.
\end{corollary}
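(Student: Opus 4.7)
The plan is to derive Corollary \ref{corKhinchin} as a direct specialization of Theorem \ref{theoremcase0khinchin} by taking the dimension function $f(t) = t^\delta$. For this choice, $t \mapsto t^{-\delta}f(t) \equiv 1$ is constant and hence (trivially) monotonic, so the hypothesis of the theorem is satisfied. Moreover, $\HH^f = \HH^\delta$, and since $\mu$ is the normalized restriction of $\HH^\delta$ to $J$ (and $J_x$ is just a translate of $J$), a set $A \subseteq J_x$ has $\HH^\delta(A) = 0$ if and only if $\mu(A) = 0$, while $\HH^\delta(A) = \HH^\delta(J_x) > 0$ implies $\mu(J_x \butnot A) = 0$. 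In particular, full $\HH^f$-measure and full $\mu$-measure coincide.

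With these reductions, the key computation is that the general divergence sum in Theorem \ref{theoremcase0khinchin} collapses to
\[
\sum_{n=1}^\infty f(\psi(d^n)/d^n)(d^n)^\delta \;=\; \sum_{n=1}^\infty \psi(d^n)^\delta.
\]
For $\psi(q) = \log(q)^{-1/\delta}$, substitution yields $\psi(d^n)^\delta = (n\log d)^{-1}$, whose sum is a multiple of the harmonic series and therefore diverges; applying Theorem \ref{theoremcase0khinchin} gives $\HH^\delta(\WA_{\psi,\mathrm{int}}) = \HH^\delta(J_x)$, so $\mu$-almost every $y \in J_x$ is intrinsically well approximable. For $\psi(q) = \log(q)^{-(1+\varepsilon)/\delta}$, the same substitution yields $\psi(d^n)^\delta = (n\log d)^{-(1+\varepsilon)}$, a convergent $p$-series; Theorem \ref{theoremcase0khinchin} then gives $\HH^\delta(\WA_{\psi,\mathrm{int}}) = 0$, so $\mu$-almost every $y \in J_x$ lies in the complement, i.e., is badly intrinsically approximable.

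There is essentially no obstacle here beyond being careful with the translation between Hausdorff-measure statements and $\mu$-measure statements, which is immediate from Ahlfors regularity of $\mu$. The entire content of the corollary is that $\psi(q) = \log(q)^{-1/\delta}$ sits exactly at the convergence threshold of $\sum \psi(d^n)^\delta$, reflecting the fact that the denominators produced by Theorem \ref{theoremdirichlet} grow geometrically along the coding (powers of $d$) so the relevant sum is indexed by $n$ rather than by $q$.
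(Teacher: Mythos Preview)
Your argument is correct and is exactly the intended derivation: the paper states Corollary~\ref{corKhinchin} as an immediate consequence of Theorem~\ref{theoremcase0khinchin} with no separate proof, and your choice $f(t)=t^\delta$ together with the convergence/divergence check of $\sum_n \psi(d^n)^\delta$ is precisely how one reads it off.
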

~
\begin{proof}[Proof of Theorem \ref{theoremcase0khinchin}]
We will use the following theorem from \cite{LSV}:
\begin{theorem}[{\cite[Theorem 1]{LSV}}]
\label{theoremLSV}
For any approximation function $\psi$, consider the set
\[
\WA_{\psi,\mathrm{term}} := \{x\in [0,1]: |x - p/q| < \psi(q)\textup{ for infinitely many }(p,q)\in\N\times d^\N\},
\]
i.e. the set of all points which are $\psi$-approximable with respect to the rationals with terminating base $d$ expansions.

Now suppose that $J$ is a $\times d$-invariant limit set satisfying \textup{(i)-(ii)}. Let $f$ be a dimension function such that $t\mapsto t^{-\delta}f(t)$ is monotonic. Then
\[
\HH^f(\WA_{\psi,\mathrm{term}}\cap J) = \begin{cases}
0 & \text{if }\sum_{n = 1}^\infty f(\psi(d^n))\times (d^n)^\delta < \infty\\
\HH^f(J) & \text{if }\sum_{n = 1}^\infty f(\psi(d^n))\times (d^n)^\delta = \infty
\end{cases}.
\]
\end{theorem}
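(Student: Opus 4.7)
The statement is a Khintchine--Jarn\'ik dichotomy for approximation of points on the $\times d$-invariant limit set $J$ by rationals with terminating base-$d$ expansion. I would handle the two halves separately, using the Ahlfors $\delta$-regularity of $\mu=\HH^\delta\given J$ (as recalled in the proof of Theorem \ref{theoremdirichlet}) and the fact that condition (ii) forces the $d^{n\delta}$ level-$n$ cylinders of $J$ to be $\gtrsim d^{-n}$-separated. The convergent direction is a clean Hausdorff--Cantelli argument, while the divergent direction relies on a Lebesgue-type statement on $J$ bootstrapped to $\HH^f$ by the Beresnevich--Velani Mass Transference Principle.

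\textbf{Convergence half (Hausdorff--Cantelli).} For each $n$ let $\mathcal R_n$ denote the set of $p\in\Z$ such that $B(p/d^n,\psi(d^n))$ meets $J$. We may assume $\psi(d^n)\leq d^{-n}$ (otherwise the series already diverges); then since consecutive candidates $p/d^n$ are spaced $d^{-n}$ apart and each level-$n$ cylinder has diameter $d^{-n}$, one gets $\#(\mathcal R_n)\lesssim d^{n\delta}$. Thus
\[
\WA_{\psi,\mathrm{term}}\cap J\subseteq\bigcap_N\bigcup_{n\geq N}\bigcup_{p\in\mathcal R_n}B(p/d^n,\psi(d^n)),
\]
and the $f$-content of the $N$-th layer is $\lesssim\sum_{n\geq N}d^{n\delta}f(\psi(d^n))\to 0$ by hypothesis, so Hausdorff--Cantelli yields $\HH^f(\WA_{\psi,\mathrm{term}}\cap J)=0$.

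\textbf{Divergence half (Mass Transference).} Set $r_n:=f(\psi(d^n))^{1/\delta}$ and let $W^\star:=\limsup_n\bigcup_p B(p/d^n,r_n)$. By the Beresnevich--Velani Mass Transference Principle on the Ahlfors $\delta$-regular space $(J,\mu)$ with dimension function $f$ (whose monotonicity of $t\mapsto t^{-\delta}f(t)$ is exactly the MTP hypothesis), one has the implication $\mu(W^\star\cap J)=\mu(J)\Rightarrow\HH^f(\WA_{\psi,\mathrm{term}}\cap J)=\HH^f(J)$. Each level event $A_n:=\bigcup_p B(p/d^n,r_n)\cap J$ has $\mu(A_n)\asymp d^{n\delta}r_n^\delta=d^{n\delta}f(\psi(d^n))$, and the divergence hypothesis gives $\sum_n\mu(A_n)=\infty$. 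I would complete a divergent Borel--Cantelli by a quasi-independence argument: transferring to the symbolic space $E^\N$ under the Bernoulli product measure, $A_n$ is essentially decidable from the digits $\omega_{n+1},\ldots,\omega_{m(n)}$ with $m(n)\approx n+\delta^{-1}\log_d(1/f(\psi(d^n)))$, so a thinned subsequence $n_1<n_2<\cdots$ with $n_{k+1}>m(n_k)$ yields genuinely independent events, and one checks that the thinning preserves divergence of $\sum_k\mu(A_{n_k})$.

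\textbf{Main obstacle.} The principal difficulty is this quasi-independence estimate, since $B(p/d^n,r_n)$ does not align exactly with any single cylinder level of $J$ -- its intersection with $J$ is typically a near-cylinder whose boundary straddles adjacent cylinders. Condition (ii), by guaranteeing the genuine gap $\gtrsim d^{-n}$ between cylinders, prevents such straddling from spoiling the count at scale $\approx r_n$; it is also the condition which makes the coding map $\pi$ injective, allowing one to transfer measure-theoretic independence statements between $E^\N$ and $J$ without loss. Once this independence is in hand, both halves reduce to routine applications of Borel--Cantelli and MTP, and the dichotomy is complete.
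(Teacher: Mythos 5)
The paper does not prove this theorem at all: it is quoted verbatim from Levesley--Salp--Velani as \cite[Theorem 1]{LSV}, and the only comment the paper makes is that ``the proof clearly generalizes'' from the $d=3$, Cantor-set case to general $\times d$-invariant sets satisfying (i)--(ii). So there is no in-paper argument to compare against; what you have written is, in outline, a reconstruction of the LSV argument itself (Hausdorff--Cantelli for convergence; Mass Transference Principle plus a full-$\mu$-measure statement for divergence), and the overall shape is right.

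That said, the divergence half as you have sketched it has a real gap. You reduce the $\mu$-measure statement to a divergent Borel--Cantelli, and then propose to manufacture honest independence by thinning to a subsequence $n_1<n_2<\cdots$ with $n_{k+1}>m(n_k)$, asserting ``one checks that the thinning preserves divergence of $\sum_k\mu(A_{n_k})$.'' That assertion is not automatic and in general is false: divergence of $\sum_n a_n$ with $a_n\geq 0$ is not inherited by a sparse subsequence (take $a_n=1/n$ and a geometrically growing $n_k$). The gap between $n_k$ and $m(n_k)$ depends on $f(\psi(d^{n_k}))$, which you have no control over without extra hypotheses such as monotonicity of $\psi$; the theorem as stated does not assume such. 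This is precisely why LSV do not argue by naive thinning -- they establish (local) \emph{ubiquity} of the resonant set $\{p/d^n : p/d^n\in J\}$ with respect to $\mu$, which yields the full-measure statement for the $\limsup$ set directly via the ubiquity framework of Beresnevich--Dickinson--Velani, and then transfer to $\HH^f$ via the Mass Transference Principle in the Ahlfors-regular setting. If you want to keep a Borel--Cantelli flavour, the correct tool is a \emph{quasi-independence on average} estimate $\sum_{m,n\leq N}\mu(A_m\cap A_n)\lesssim\bigl(\sum_{n\leq N}\mu(A_n)\bigr)^2$ together with the Erd\H{o}s--Chung / Kochen--Stone divergence lemma, which does not require sparsifying the sequence and hence does not lose divergence; this pairwise estimate follows from the product structure of $\mu$ under the coding map, using exactly the near-cylinder decomposition of $A_n\cap J$ that you describe and the separation furnished by condition (ii). Your convergence half is fine, modulo a cleaner justification than ``we may assume $\psi(d^n)\leq d^{-n}$'' -- it is safer to simply drop the indices $n$ for which $\psi(d^n)>d^{-n}$ from the cover, noting that for such $n$ one has $f(\psi(d^n))(d^n)^\delta\gtrsim 1$ (using monotonicity of $t^{-\delta}f(t)$), so there can be only finitely many of them under the convergence hypothesis.
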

In \cite{LSV} the theorem is stated in the case $d = 3$, $J$ the Cantor set, but the proof clearly generalizes.

Let us call a point $y\in J$ \emph{badly terminally approximable with respect to $\psi$} if $y\notin\WA_{\varepsilon\psi,\mathrm{term}}$ for some $\varepsilon > 0$. Clearly, the proof of Lemma \ref{lemmacase0ba} generalizes to the following statement:
\begin{lemma}
Suppose that $J$ is as above. Then for every $\psi:\N\rightarrow(0,\infty)$, a point $y = \pi(\tau)\in J$ is badly terminally approximable with respect to $\psi$ if and only if there exists $K < \infty$ such that for every pair $(n,r)\in\N^2$ such that $\tau_{n + 1}^{n + r}$ is all $0$s or all $1$s, we have
\[
r \leq K + \Psi(n).
\]
\end{lemma}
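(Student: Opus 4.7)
The proof parallels that of Lemma~\ref{lemmacase0ba}, with two modifications reflecting the switch from intrinsic to terminal approximation. First, the approximating rationals are general dyadic-type rationals $p/d^n \in \N \times d^\N$ rather than elements of $\Q \cap J_x$. Second, every terminating rational $p/d^n \in [0,1]$ has exactly two base-$d$ codings: the terminating form $0.b_1 \cdots b_n\, 0\, 0\, 0\, \cdots$ and, when $b_n \geq 1$, the nonterminating form $0.b_1 \cdots (b_n - 1)(d-1)(d-1)\cdots$. Accordingly, ``$\omega$ and $\tau$ agree on a long block'' is replaced by ``$\tau$ is constantly $0$, or constantly $(d-1)$, on a long block'' (interpreting the ``all $1$s'' in the statement as ``all $(d-1)$s'' in the obvious way). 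Throughout, write $T_n(y) := \sum_{i \leq n} \tau_i d^{-i}$.

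\textbf{Reverse direction.} Suppose that for every $K < \infty$ there is a pair $(n,r)$ with $\tau_{n+1}^{n+r}$ all $0$s (respectively, all $(d-1)$s) and $r > K + \Psi(n)$. Set $p/d^n := T_n(y)$ in the $0$-case and $p/d^n := T_n(y) + d^{-n}$ in the $(d-1)$-case. A geometric tail computation gives either
\[
|y - T_n(y)| = \sum_{i > n+r} \tau_i d^{-i} \leq d^{-(n+r)}
\]
or, respectively, $d^{-n} - \sum_{i > n}\tau_i d^{-i} = \sum_{i > n}(d - 1 - \tau_i) d^{-i} \leq d^{-(n+r)}$. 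Both are smaller than $\varepsilon\psi(d^n)$ for a suitable $\varepsilon = \varepsilon(K) \to 0$. As $K \to \infty$ this places $y$ in $\WA_{\varepsilon\psi,\mathrm{term}}$ for every $\varepsilon > 0$, so $y$ is not badly terminally approximable.

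\textbf{Forward direction.} Suppose $y$ is not badly terminally approximable. For each $\varepsilon = d^{-K}$, get infinitely many $p/d^n$ with $|y - p/d^n| < \varepsilon\psi(d^n)$. For all sufficiently large $n$, $\varepsilon\psi(d^n)$ is smaller than the half-gap $d^{-n}/2$, forcing $p/d^n$ to be one of the two rationals of denominator $d^n$ closest to $y$, namely $T_n(y)$ or $T_n(y) + d^{-n}$. In the first case, let $r$ be maximal with $\tau_{n+1}^{n+r} = 0^r$; then $\tau_{n+r+1} \in E$ is nonzero, and by condition~(ii) (since $0 \in E$ and $E$ has no two adjacent integers, $1 \notin E$) every positive element of $E$ is at least $2$, so $\tau_{n+r+1} \geq 2$. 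Hence
\[
2\, d^{-(n+r+1)} \leq \sum_{i > n} \tau_i d^{-i} = |y - p/d^n| < \varepsilon\psi(d^n),
\]
and taking $\log_d$ yields the required lower bound on $r$. The case $p/d^n = T_n(y) + d^{-n}$ is symmetric, using that $d-1 \in E$ and the no-adjacency condition force $d - 2 \notin E$, so every element of $E$ other than $d-1$ is at most $d - 3$; this produces runs of $(d-1)$s in place of $0$s.

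\textbf{Main obstacle.} The substantive step is the forward direction's conversion of the metric estimate $|y - p/d^n| < \varepsilon\psi(d^n)$ into a symbolic one about long runs of extremal digits in $\tau$. This step depends essentially on condition~(ii): without the ``no two adjacent integers'' hypothesis, the nonzero digits of $\tau$ could be as small as $1$ (or the non-$(d-1)$ digits as large as $d - 2$), and a small tail sum would no longer force a long run of $0$s (respectively $(d-1)$s), so the symbolic characterization would fail.
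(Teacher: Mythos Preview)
Your approach is exactly what the paper intends: its entire ``proof'' is the assertion that the proof of Lemma~\ref{lemmacase0ba} generalizes, and you have spelled out that generalization. The reverse direction is correct.

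The forward direction, however, has a real gap. From $2\,d^{-(n+r+1)} < \varepsilon\psi(d^n)$ with $\varepsilon=d^{-K}$, taking $-\log_d$ yields
\[
r > K + \Psi(n) - n - 1 + \log_d 2,
\]
\emph{not} $r > K + \Psi(n)$. The stray $-n$ arises because terminal approximability is defined via the LSV convention $|y-p/q|<\psi(q)$, whereas Lemma~\ref{lemmacase0ba} uses the intrinsic convention $|y-p/q|<\psi(q)/q$; in that lemma the extra factor $q^{-1}=d^{-n}$ is precisely what cancels the $n$. Your ``half-gap'' step ($\varepsilon\psi(d^n)<d^{-n}/2$ for large $n$) is another symptom of the same mismatch and is unjustified for general $\psi$. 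In fact the equivalence as literally stated fails for $\psi\equiv 1$: every $y\in J$ is terminally well approximable (take $p=\lfloor yd^n\rfloor$ with $n$ large), yet any $\tau$ with bounded runs of $0$s and $(d-1)$s satisfies the right-hand side with $K$ equal to that bound.

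So the issue is not your strategy but the normalization: either replace $\Psi(n)$ by $\Psi(n)-n$ in the statement, or read ``badly terminally approximable'' with the $\psi(q)/q$ convention, after which your argument goes through verbatim. The paper's subsequent remark about passing to ``$q\mapsto q\psi(q)$'' because of ``a difference in notation between our paper and \cite{LSV}'' is where this discrepancy is meant to be absorbed.
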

Now let $x = \pi(\omega)\in J$ satisfy (iii). Define an automorphism $\Phi:E^\N\rightarrow E^\N$ by the following procedure:
\begin{itemize}
\item For each $n\in\N$, choose a permutation $\Phi_n$ of $E$ such that $\Phi_n(\omega_n) = 0$.
\item Let $\Phi(\tau) = (\Phi_n(\tau_n))_n$.
\end{itemize}
Clearly, $\Phi$ is an isometry of $E^\N$, and so the map $\w{\phi}:J_x\rightarrow J$ defined by
\[
\w{\Phi}(y) = \pi(\Phi(x + y))
\]
is bi-Lipschitz. Furthermore, $\w{\Phi}$ sends rational points of $J_x$ to left endpoints of $J$, whose denominator is the same up to a constant.

Observe now that a point $y\in J_x$ is badly intrinsically approximable with respect to an approximation function $\psi$ if and only if both $\w{\Phi}(y)$ is badly terminally approximable with respect to $q\mapsto q\psi(q)$ (the factor comes from a difference in notation between our paper and \cite{LSV}). Thus, the Hausdorff measure of the badly intrinsically approximable points agrees up to a constant with the Hausdorff measure of the badly terminally approximable points. Applying Theorem \ref{theoremLSV} completes the proof.
\end{proof}

\section{The intrinsic denominator} \label{sectionintrinsicdenominator}
In this section we assume that $J$ is a limit set of a rational IFS satisfying the open set condition.

Suppose that $p/q\in\Q\cap J$ is the image of the eventually periodic word $\omega\in E^\N$. Fix $n\in\N$ so that $\sigma^n(\omega)$ is periodic, and let $m$ be a period of $\sigma^n(\omega)$, so that
\[
\omega = \omega_1\ldots\omega_n\omega_{n + 1}\ldots\omega_{n + m}\omega_{n + 1}\ldots.
\]
Based on $\omega$, $n$, and $m$, we can define $p_{(i)}$, $q_{(i)}$, $r_{(i)}$, $i = 1,2$ as in the proof of Theorem \ref{theoremdirichlet} and we define the \emph{intrinsic denominator of $p/q\in J$ with respect to the triple $(\omega,n,m)$} to be the denominator of (\ref{canonicalform}), i.e. the intrinsic denominator is the number
\[
q_{(1)}(q_{(2)} - p_{(2)}).
\]
\begin{observation}
\label{observationintrinsicdenominator}
If $\omega$ is fixed, then the intrinsic denominator is minimized when $n$ and $m$ are minimal. Furthermore, this intrinsic denominator divides the intrinsic denominator of $p/q$ with respect to any other pair $(\w{n},\w{m})$.
\end{observation}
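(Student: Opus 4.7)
The plan is to parameterize all admissible pairs $(n,m)$ and reduce the claim to two elementary monotonicity steps. Let $n_0 \geq 0$ be the minimal preperiod of $\omega$ and $m_0 \geq 1$ the minimal period of $\sigma^{n_0}(\omega)$. A pair $(n,m)$ is admissible, in the sense that $\sigma^n(\omega)$ has period $m$, if and only if $n \geq n_0$ and $m_0 \mid m$; indeed, the set of periods of a purely periodic sequence is exactly the set of positive multiples of its minimal period. Hence every admissible pair is reachable from $(n_0,m_0)$ by first passing to $(n_0, k m_0)$ for some $k \geq 1$ and then incrementing $n$ one unit at a time. It therefore suffices to show that each of these two elementary steps multiplies $q_{(1)}(q_{(2)} - p_{(2)})$ by a positive integer.

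For the period-multiplying step $(n,m) \mapsto (n,km)$, periodicity tells us that the word $\omega_{n+1}^{n+km}$ is the $k$-fold concatenation of $\omega_{n+1}^{n+m}$ with itself, so the new map $u_{(2)}'$ equals the $k$-fold composition $u_{(2)}^k$. In terms of unreduced products this says $p_{(2)}' = p_{(2)}^k$ and $q_{(2)}' = q_{(2)}^k$, while $p_{(1)}$ and $q_{(1)}$ do not change. Factoring
\[
q_{(2)}^k - p_{(2)}^k = (q_{(2)} - p_{(2)}) \sum_{i=0}^{k-1} q_{(2)}^{k-1-i} p_{(2)}^i,
\]
shows that the new intrinsic denominator is the old one times the integer $\sum_{i=0}^{k-1} q_{(2)}^{k-1-i} p_{(2)}^i$, which is positive because $u_{(2)}$ is a contraction and hence $q_{(2)} > |p_{(2)}|$.

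For the preperiod-extending step $(n,m) \mapsto (n+1,m)$, periodicity gives $\omega_{n+m+1} = \omega_{n+1}$, so the word $\omega_{n+2}^{n+m+1}$ is a cyclic rotation of $\omega_{n+1}^{n+m}$ by one position. Consequently the unreduced products
\[
p_{(2)}' = \prod_{i=n+2}^{n+m+1} p_{\omega_i}, \qquad q_{(2)}' = \prod_{i=n+2}^{n+m+1} q_{\omega_i}
\]
are \emph{literally} equal as integers to $p_{(2)}$ and $q_{(2)}$, merely rearranged. On the other hand $u_{(1)}' = u_{(1)} \circ u_{\omega_{n+1}}$, so $p_{(1)}' = p_{(1)} p_{\omega_{n+1}}$ and $q_{(1)}' = q_{(1)} q_{\omega_{n+1}}$. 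Thus the intrinsic denominator picks up the positive integer factor $q_{\omega_{n+1}}$.

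Composing these two steps shows that for every admissible $(n,m)$, the associated intrinsic denominator is a positive integer multiple of the one attached to $(n_0, m_0)$, which proves both halves of the observation. The only delicate point to watch is that $p_{(i)}$ and $q_{(i)}$ denote \emph{unreduced} products of the $p_a$ and $q_a$ rather than the numerator and denominator of $u_{(i)}$ in lowest terms; without this convention, the cyclic rearrangement in the preperiod-extending step would preserve only the contraction ratio and not the integer equality $p_{(2)}' = p_{(2)}$, $q_{(2)}' = q_{(2)}$ on which the argument rests.
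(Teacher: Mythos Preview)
Your argument is correct. The paper states this observation without proof, treating it as routine, so there is no author's argument to compare against; your two-step reduction (multiply the period, then increment the preperiod) is a clean and complete way to fill in the details. The key points you identify---that the admissible pairs are exactly those with $n\ge n_0$ and $m_0\mid m$, that the period block for $(n,km)$ is a $k$-fold repeat so $q_{(2)}^k - p_{(2)}^k$ factors through $q_{(2)}-p_{(2)}$, and that cycling the period block by one letter leaves the unreduced products $p_{(2)},q_{(2)}$ literally unchanged---are exactly the facts needed, and your closing remark about the necessity of the unreduced convention is well taken.
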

Thus, we define the \emph{intrinsic denominator of $p/q$ with respect to $\omega$} to be the intrinsic denominator with respect to $(\omega,n,m)$, with $n$ and $m$ minimal. The intrinsic denominator of $p/q$ with respect to $\omega$ represents ``everything that the symbolic representation $p/q = \pi(\omega)$ can tell us about the denominator of $p/q$''.

In general, a fixed rational number could have more than one eventually periodic symbolic representation, or it could have none at all. However, we do not know of any examples of rationals with symbolic representations which are not eventually periodic. Furthermore, in most of the cases that we care about, this is impossible:

\begin{lemma}
\label{lemmacanonical}
Suppose that $p_a = \pm 1$ for all $a\in E$, where $p_a$ are given by \textup{(\ref{rationalIFS})}. Then every rational in $J$ is the image of an eventually periodic word (and therefore has an intrinsic denominator).
\end{lemma}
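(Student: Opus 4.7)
The plan is to start with an arbitrary preimage $\omega \in E^\N$ of $p/q$ and show that its forward orbit under the shift $\sigma$ must eventually be periodic, after which we can surgically construct an eventually periodic preimage. The key algebraic input is that when $p_a = \pm 1$, the inverse map takes the form $u_a^{-1}(y) = \pm q_a y \mp r_a$, which is an affine map with integer coefficients. Consequently, if $y = a/b \in \Q$ is written in reduced form, then $u_a^{-1}(y)$ has a reduced denominator that \emph{divides} $b$; in particular, applying $u_a^{-1}$ never enlarges the reduced denominator.

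Fix any $\omega \in E^\N$ with $\pi(\omega) = p/q$, and recall the shift relation $\pi(\sigma^n \omega) = u_{\omega_n}^{-1}\cdots u_{\omega_1}^{-1}(p/q)$. Iterating the denominator observation, every point of the forward orbit $\{\pi(\sigma^n\omega) : n \geq 0\}$ is a rational whose reduced denominator divides $q$. Since this orbit also lies in the compact set $J$, it lives in the finite set of rationals in (say) the bounding interval of $J$ with denominator dividing $q$. Thus by the pigeonhole principle there exist integers $0 \leq n < m$ with $\pi(\sigma^n \omega) = \pi(\sigma^m \omega)$.

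Call this common value $z$. Since $z = u_{\omega_{n+1}}\cdots u_{\omega_m}(z)$, the point $z$ is a fixed point of the composition $u_{\omega_{n+1}} \cdots u_{\omega_m}$. Because each $q_a \geq 2$ (the hypothesis $p_a = \pm 1$ together with the contracting requirement in the IFS forces this), the composition is a strict contraction and has a unique fixed point, which is visibly $\pi((\omega_{n+1}\cdots\omega_m)^\infty)$. Therefore $z = \pi((\omega_{n+1}\cdots\omega_m)^\infty)$, and setting
\[
\omega' := \omega_1\cdots\omega_n (\omega_{n+1}\cdots\omega_m)^\infty \in E^\N,
\]
we compute $\pi(\omega') = u_{\omega_1}\cdots u_{\omega_n}(z) = u_{\omega_1}\cdots u_{\omega_n}(\pi(\sigma^n \omega)) = \pi(\omega) = p/q$. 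Thus $p/q$ is the image of the eventually periodic word $\omega'$, as desired.

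I do not anticipate a real obstacle here: the proof is a clean pigeonhole argument, and the crucial ingredient, the fact that the inverse branches $u_a^{-1}$ have integer linear coefficients and hence cannot increase denominators, uses the hypothesis $p_a = \pm 1$ in exactly one clean place. The only point that warrants mild care is the final step, where one must invoke contractivity of $u_{\omega_{n+1}}\cdots u_{\omega_m}$ so as to identify its fixed point with $\pi((\omega_{n+1}\cdots\omega_m)^\infty)$ without needing to assume that $\pi$ itself is globally injective.
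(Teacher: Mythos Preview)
Your proof is correct and follows essentially the same approach as the paper: both hinge on the observation that when $p_a = \pm 1$ the inverse branches $u_a^{-1}$ have integer coefficients and hence cannot enlarge reduced denominators, so the forward orbit under the shift hits only finitely many values and pigeonhole applies. The one minor difference is in the endgame: the paper invokes the open set condition to conclude that $\pi^{-1}$ of a finite set is finite and hence that the original word $\omega$ is itself eventually periodic, whereas you work with the finite set of \emph{images} $\pi(\sigma^n\omega)$, obtain a collision there, and then explicitly build an eventually periodic $\omega'$ via the fixed-point identification---this sidesteps any appeal to finiteness of the fibers of $\pi$, which is a clean touch.
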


\begin{remark}
In the case where $J$ is $\times d$-invariant, then this lemma is well-known (it asserts that every rational has an eventually periodic $d$-ary representation). However, the lemma does not appear to be well-known e.g. for the IFS ``$1/3,1/4$''
\begin{align*}
u_0(x) &= x/3\\
u_1(x) &= x/4 + 3/4.
\end{align*}
\end{remark}

\begin{proof}[Proof of Lemma \ref{lemmacanonical}]
For each $a\in E$, since $p_a = \pm 1$ we can write
\[
u_a^{-1}(x) = \pm(q_a x - r_a).
\]
In particular, if $x$ is rational then the denominator of $u_a^{-1}(x)$ divides the denominator of $x$. Thus if $\omega\in E^\N$ is a preimage of $x$ under $\pi$, then the forward orbit $(\sigma^n(\omega))_n$ lies in the set $\pi^{-1}\{p/q:\text{$q$ divides the denominator of $x$}\}$, which is finite by the open set condition. Thus $\omega$ is eventually periodic.
\end{proof}

For the remainder of this section, we will restrict ourselves to the case where $p_a = \pm 1$ for all $a\in E$, so that every rational in $J$ has at least one intrinsic denominator. We will also impose the following condition which guarantees that no rational can have more than one intrinsic denominator:
\begin{definition}
The IFS $(u_a)_{a\in E}$ satisfies the \emph{strong separation condition} if there exists a closed interval $[c,d]$ such that the collection $(u_a([c,d]))_{a\in E}$ is a disjoint collection of subsets of $[c,d]$.
\end{definition}
For example, the IFS for the Cantor set satisfies the strong separation condition. Note that the strong separation condition implies the open set condition, since we can take our open set to be $(c,d)$.

From now on, we will assume that our IFS satisfies the strong separation condition. Since this condition implies that every element of $J$ has exactly one symbolic representation, it follows that every rational in $J$ has exactly one intrinsic denominator.
\begin{notation}
The intrinsic denominator of $p/q\in J$ will be denoted $\qint$, whereas the denominator of $p/q$ in reduced form will be denoted $\qred$.
\end{notation}
\begin{observation}
We have $\qred \divides \qint$.
\end{observation}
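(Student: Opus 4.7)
The plan is to unwind the definitions and invoke the elementary fact that the reduced denominator of a rational number divides every other denominator representing that rational. Specifically, by construction the intrinsic denominator $\qint$ is the value $q_{(1)}(q_{(2)} - p_{(2)})$ produced by writing
\[
p/q = \frac{p_{(1)}}{q_{(1)}}\frac{r_{(2)}}{q_{(2)} - p_{(2)}} + \frac{r_{(1)}}{q_{(1)}}
\]
and clearing denominators without cancellation, as in (\ref{canonicalform}). Thus there exists some integer $P \in \Z$ with $p/q = P/\qint$.

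Next, write the same rational in reduced form as $P_0/\qred$ with $\gcd(P_0,\qred)=1$ and $\qred \in \N$. From $P/\qint = P_0/\qred$ we obtain $P \qred = P_0 \qint$, so $\qred \mid P_0\qint$; since $\gcd(P_0,\qred) = 1$, it follows that $\qred \mid \qint$, as required.

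There is no real obstacle here — the only subtlety worth flagging is that $\qint$ is a well-defined integer (not just a rational), which is immediate from the explicit expression $q_{(1)}(q_{(2)} - p_{(2)})$ with $q_{(i)}, p_{(i)} \in \Z$ and the fact that $q_{(2)} > |p_{(2)}|$ (because $u_{(2)}$ is a contraction, so its multiplier $p_{(2)}/q_{(2)}$ has absolute value less than $1$), ensuring $\qint$ is a positive integer.
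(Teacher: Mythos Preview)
Your argument is correct and is exactly the obvious verification the paper has in mind; in fact the paper states this as an observation with no proof at all, since once one recalls that $\qint = q_{(1)}(q_{(2)} - p_{(2)})$ is a positive integer denominator for $p/q$ via (\ref{canonicalform}), the divisibility $\qred \mid \qint$ is immediate. Your added remark that $q_{(2)} - p_{(2)} > 0$ because $u_{(2)}$ is a contraction is a nice touch that makes the positivity of $\qint$ explicit.
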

\begin{definition}
\label{badsymbolic}
Let $\psi:(0,\infty)\rightarrow(0,\infty)$ be a nonincreasing function. A point $x\in J$ is said to be \emph{badly symbolically approximable with respect to $\psi$} if there exists $\varepsilon > 0$ such that for all $p/q\in\Q\cap J$ we have
\[
\left|x - p/q\right| \geq \varepsilon\frac{\psi(\qint)}{\qint}.
\]
Otherwise, $x$ is said to be \emph{symbolically well approximable with respect to $\psi$}.
\end{definition}
So, badly intrinsically approximable implies badly symbolically approximable, but not vice-versa.

\begin{notation}
If $\omega\in E^r$ is a finite word, we define the \emph{pseudolength} of $\omega$ to be the number
\[
\pseud(\omega) := \sum_{i = 1}^r \log(q_{\omega_i}).
\]
In the case where the set $J$ is $\times d$-invariant for some $d$, the pseudolength of $\omega$ is just equal to $\log(d)$ times the length of $\omega$.
\end{notation}

\begin{lemma}
\label{lemmaba}
Suppose that
\begin{enumerate}[i)]
\item $\psi$ is \emph{slowly varying} i.e. $\psi(Kq)\asymp \psi(q)$ for all $K > 0$
\item $\psi$ is bounded
\end{enumerate}
Then for all $x = \pi(\omega) \in J$, $x$ is badly symbolically approximable with respect to $\psi$ if and only if there exists $K < \infty$ such that for every finite word $\eta$ of length $r$ which occurs twice (possibly overlapping) in the initial segment $\omega_1^\ell$,\footnote{In the sequel we shall call such an $\eta$ simply a ``repeat $\eta$''.} we have
\begin{equation}
\label{badlyapproximable4}
\pseud(\eta) \leq K + \Psi(\pseud(\omega_1^{\ell - r})),
\end{equation}
where
\[
\Psi(t) := -\log(\psi(e^t)).
\]
\end{lemma}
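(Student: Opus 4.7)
The plan is to set up a correspondence between good rational approximants of $x$ and long repeated subwords in $\omega$, then show the two conditions are equivalent via this correspondence.

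For the ``$\Leftarrow$'' direction (existence of a long repeat yields a good approximation), given a repeat $\eta = \omega_{n+1}^{n+r} = \omega_{n+m+1}^{n+m+r}$ inside $\omega_1^\ell$ with $\ell = n + m + r$, I would apply the construction from the proof of Theorem~\ref{theoremdirichlet}: form
\[
p/q = \pi\bigl(\omega_1^n \concat [\omega_{n+1}^{n+m}]^\infty\bigr) \in \Q \cap J.
\]
Since $p_a = \pm 1$, one has $\qint = q_{(1)}(q_{(2)} - p_{(2)})$ with $q_{(2)} - p_{(2)} \asymp q_{(2)}$, so $\log \qint = \pseud(\omega_1^{n+m}) + O(1) = \pseud(\omega_1^{\ell - r}) + O(1)$. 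The strong separation condition gives $|\pi\sigma^n(\omega) - \pi\sigma^{n+m}(\omega)| \lesssim e^{-\pseud(\eta)}$ (both points lie in the cylinder $[\eta]$), and the Dirichlet-type bound from the proof of Theorem~\ref{theoremdirichlet} yields $|x - p/q| \lesssim \qint^{-1} e^{-\pseud(\eta)}$. Hence if $\pseud(\eta) > K + \Psi(\pseud(\omega_1^{\ell - r}))$, one obtains $|x - p/q| \lesssim e^{-K}\psi(\qint)/\qint$, and letting $K \to \infty$ yields symbolic well-approximability.

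For the ``$\Rightarrow$'' direction, given $p/q \in \Q \cap J$ with $|x - p/q| \leq \varepsilon \psi(\qint)/\qint$, I would use Lemma~\ref{lemmacanonical} and the strong separation condition (which makes $\pi$ injective) to write $p/q = \pi(\zeta \concat \xi^\infty)$ uniquely, with minimal pre-period length $n$ and period length $m$; Observation~\ref{observationintrinsicdenominator} then gives $\log \qint = \pseud(\omega_1^{n+m}) + O(1)$. Let $s$ be the length of the common prefix of $\omega$ and $\zeta \concat \xi^\infty$. Strong separation implies $|x - p/q| \asymp e^{-\pseud(\omega_1^s)}$, so the approximation inequality combined with the boundedness of $\psi$ forces $s > n + m$ once $\varepsilon$ is small. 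Because the portion of $\zeta \concat \xi^\infty$ beyond position $n$ is $m$-periodic and matches $\omega$ through position $s$, one obtains $\omega_{n+1}^{s-m} = \omega_{n+m+1}^{s}$, namely a repeat of length $r := s - n - m$ inside $\omega_1^\ell$ with $\ell := s$ and $\omega_1^{\ell - r} = \omega_1^{n+m}$. The identity $\pseud(\eta) = \pseud(\omega_1^s) - \pseud(\omega_1^{n+m})$, together with the estimate $\pseud(\omega_1^s) \geq \log \qint + \log(1/\varepsilon) + \Psi(\log \qint) - O(1)$ and the slow-variation hypothesis (which gives $\Psi(\log \qint) = \Psi(\pseud(\omega_1^{\ell - r})) + O(1)$), yields $\pseud(\eta) \geq \log(1/\varepsilon) + \Psi(\pseud(\omega_1^{\ell - r})) - O(1)$. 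Taking $\varepsilon \to 0$ produces repeats violating (\ref{badlyapproximable4}) for every fixed $K$.

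The main obstacle I anticipate is careful bookkeeping of $O(1)$ constants: one must verify that the slowly varying and bounded hypotheses on $\psi$ allow the additive discrepancies between $\pseud(\omega_1^{n+m})$ and $\log \qint$, and between the upper and lower strong-separation estimates on $|x - p/q|$, to be absorbed into the constant $K$ appearing in (\ref{badlyapproximable4}). A secondary delicate point is that minimality of $n$ and $m$ (guaranteed by uniqueness of the symbolic representation under strong separation, via Lemma~\ref{lemmacanonical}) is exactly what keeps $\pseud(\omega_1^{n+m})$ comparable to $\log \qint$ without wasteful inflation; without this, the right-hand side of (\ref{badlyapproximable4}) could be made trivially larger by padding the period.
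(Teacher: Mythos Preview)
Your proposal is correct and follows essentially the same route as the paper's proof: in one direction you build the eventually periodic word $\omega_1^n\concat[\omega_{n+1}^{n+m}]^\infty$ from a repeat and bound $|x-p/q|$ via the contraction estimate, and in the other you read off a repeat $\omega_{n+1}^{s-m}=\omega_{n+m+1}^{s}$ from the agreement between $\omega$ and the periodic preimage of a good approximant, using strong separation for the two-sided estimate on $|x-p/q|$ and boundedness of $\psi$ to force $s>n+m$. One small caveat: in your ``$\Leftarrow$'' direction the equality $\log\qint=\pseud(\omega_1^{n+m})+O(1)$ need not hold, since the constructed word could have a shorter minimal period; but only the inequality $\qint\lesssim e^{\pseud(\omega_1^{n+m})}$ is needed (together with $\psi$ nonincreasing), and that suffices for the bound you want.
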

Note that if $\psi(q) = \log(q)^{-s/\delta}$ then $\Psi(t) = s\log(t)/\delta$.
\begin{proof}
Suppose that $x\in J$ is symbolically well approximable with respect to $\psi$. Then for all $\varepsilon > 0$ there exist infinitely many rational approximations $p/q\in \Q\cap J$ satisfying $|x - p/q| \leq \varepsilon \psi(\qint)/\qint$. Fix such a $p/q$, and let $\tau\in E^\N$ be its preimage. Let $n,m\in\N$ be minimal such that
\[
\tau = \tau_1\ldots \tau_{n + m}\tau_{n + 1}\ldots
\]
According to (\ref{canonicalform}), the intrinsic denominator of $p/q$ is equal to
\[
\qint := \left(\prod_{i = 1}^n q_{\tau_i}\right)\left(\prod_{i = 1}^m q_{\tau_{n + i}} \pm 1\right) \asymp \prod_{i = 1}^{n + m}q_{\tau_i}.
\]
On the other hand, if $\ell$ is the largest integer for which $\omega_\ell = \tau_\ell$, then
\[
\left|x - p/q\right| \asymp \prod_{i = 1}^\ell\frac{1}{q_{\tau_i}}.
\]
Here we have used the strong separation condition to get the lower bound. Thus we have
\[
\prod_{i = 1}^\ell\frac{1}{q_{\tau_i}} \lesssim \varepsilon\left(\prod_{i = 1}^{n + m}\frac{1}{q_{\tau_i}}\right)\psi\left(\prod_{i = 1}^{n + m}q_{\tau_i}\right).
\]
Here we have used the slowly varying condition (i).

Since $\psi$ is bounded, by choosing $\varepsilon$ small enough we can force $\prod_{i = 1}^\ell(1/q_{\tau_i}) < \prod_{i = 1}^{n + m}(1/q_{\tau_i})$, which implies $\ell > n + m$. Thus we have
\begin{equation}
\label{bintermsofa}
\tau = \omega_1\ldots\omega_{n+m}\omega_{n+1}\ldots\omega_{n+m}\omega_{n+1}\ldots
\end{equation}
and in particular
\[
\prod_{i = n + m + 1}^\ell\frac{1}{q_{\omega_i}} \lesssim \varepsilon\psi\left(\prod_{i = 1}^{n + m}q_{\omega_i}\right).
\]
and taking negative logarithms yields
\[
\pseud(\omega_{n + m + 1}^\ell) \gtrsim_+ -\log(\varepsilon) + \Psi(\pseud(\omega_1^{n + m})).
\]
Since $\omega_{n + 1}^{\ell - m} = \tau_{n + 1}^{\ell - m} = \tau_{n + m + 1}^\ell = \omega_{n + m + 1}^\ell$, it follows that there are infinitely many words $\eta$ which are repeated in $\omega$ but do not satisfy (\ref{badlyapproximable4}).

On the other hand, suppose that for all $K < \infty$ there exist infinitely many words $\eta$ which are repeated in $\omega$ but do not satisfy (\ref{badlyapproximable4}). For each such $\eta$, let $r$ be the length of the $\eta$, and let $0 \leq n < n + m$ be the places where it occurs, so that $\eta = \omega_{n + 1}^{n + r} = \omega_{n + m + 1}^{n + m + r}$. Let $\tau$ be defined by (\ref{bintermsofa}), and let $p/q = \pi(\tau)$. Then $\omega$ and $\tau$ agree up to at least $\ell := n + m + r$ places, and a reverse calculation yields that $|x - p/q| \lesssim e^{-K}\psi(\qint)/\qint$. Thus $x$ is symbolically well approximable with respect to $\psi$.
\end{proof}

We now discuss the approximability of a $\mu$-random number $x\in J$. In the following discussion $x$ will always denote a $\mu$-random number, and $\omega$ will denote its preimage under $\pi$. We write $\prob$ for probability and $\scrE$ for expected value, so that $\prob(x\in S) = \mu(S)$ and $\scrE[f(x)] = \int f(t)\dee\mu(t)$. In particular, the sequence $(\omega_i)_i$ is a sequence of independent and identically distributed random variables, whose distribution is given by
\[
\prob(\omega_n = a) = q_a^{-\delta} = e^{-\delta\pseud(a)}.
\]

\begin{lemma}
\label{lemmaprobability}
Fix $n,m\in\N$ and $\ell_0 > 0$. Let
\[
E_{n,m,\ell_0} := \{\eta\in E^*:\text{there exists $r$ such that $\eta_{n + 1}^{n + r} = \eta_{n + m + 1}^{n + m + r}$ and such that $\pseud(\eta_{n + 1}^{n + r})\geq \ell_0$}\}.
\]
Then $\prob(\omega\in E_{n,m,\ell_0}) \leq e^{-\delta\ell_0}$.
\end{lemma}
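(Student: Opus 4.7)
The plan is to collapse the ``$\exists r$'' in the definition of $E_{n,m,\ell_0}$ into a single stopping time and then condition on that time. Define $T$ to be the least $r\in\N$ with $\pseud(\omega_{n+1}^{n+r})\geq\ell_0$; since each $\log q_{\omega_{n+i}}$ is bounded below by $\log 2>0$, $T$ is a.s.\ finite and is a stopping time in the filtration of $(\omega_{n+i})_{i\geq 1}$. Any $r$ witnessing $\omega\in E_{n,m,\ell_0}$ must satisfy $r\geq T$ by the pseudolength constraint, and the equality of the first $r$ coordinates restricts to equality of the first $T$; conversely, once that equality holds, $r=T$ itself witnesses membership. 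Hence
\[
\prob(\omega\in E_{n,m,\ell_0})=\prob\bigl(\omega_{n+1}^{n+T}=\omega_{n+m+1}^{n+m+T}\bigr),
\]
and it suffices to bound the right-hand side.

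I would condition on $(T,\omega_{n+1}^{n+T})=(k,s)$, noting $\prob(\omega_{n+1}^{n+k}=s)=e^{-\delta\pseud(s)}$, and split according to whether $k\leq m$ or $k>m$. In the disjoint case $k\leq m$, the index sets $\{n+1,\ldots,n+k\}$ and $\{n+m+1,\ldots,n+m+k\}$ do not meet, so by independence the conditional probability of the match is again $e^{-\delta\pseud(s)}$; using $\pseud(s)\geq\ell_0$, this case contributes at most $e^{-\delta\ell_0}\prob(T\leq m)$. The overlapping case $k>m$ is the main subtlety: the positions $n+m+1,\ldots,n+k$ already sit inside the conditioned block and take the values $s_{m+1},\ldots,s_k$, so the matching requirement forces $s_{m+i}=s_i$ for $i=1,\ldots,k-m$, i.e.\ $s$ must be $m$-periodic; writing $t=s_1^m$, the prescribed values $s_{k-m+1},\ldots,s_k$ on the remaining independent positions form a cyclic shift of $t$, making the conditional match probability equal to $e^{-\delta\pseud(t)}$. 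For each $t\in E^m$ exactly one $k>m$ can arise (namely the value of $T$ along $t^\infty$), and it does so precisely when $\pseud(t)<\ell_0$; hence this case contributes at most $e^{-\delta\ell_0}\sum_{t\in E^m,\;\pseud(t)<\ell_0}e^{-\delta\pseud(t)}$.

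Adding the two bounds, since $T\leq m$ iff $\pseud(\omega_{n+1}^{n+m})\geq\ell_0$ we have $\prob(T\leq m)=\sum_{t\in E^m,\;\pseud(t)\geq\ell_0}e^{-\delta\pseud(t)}$, so the bracketed pieces telescope to $\sum_{t\in E^m}e^{-\delta\pseud(t)}=1$ by the Hutchinson--Moran identity $\sum_{a\in E}q_a^{-\delta}=1$ (valid under the section's standing assumption $p_a=\pm1$). This yields $\prob(E_{n,m,\ell_0})\leq e^{-\delta\ell_0}$. I expect the overlapping case to be the main obstacle, since one must both detect the forced periodicity of $s$ and observe that the cyclic-shift structure makes the pseudolength of the free window equal to $\pseud(t)$ — precisely what is needed for the telescoping to close.
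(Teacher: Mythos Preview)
Your argument is correct. The paper takes a different route: after reducing to $n=0$, it defines a nonnegative potential $\phi$ on finite prefixes $\eta\in E^{m+N}$ by setting $\phi(\eta)=e^{\delta\ell_0}$ once $\eta\in E_{0,m,\ell_0}$, $\phi(\eta)=e^{\delta\pseud(\eta_{m+1}^{m+N})}$ while the match $\eta_1^N=\eta_{m+1}^{m+N}$ persists below threshold, and $\phi(\eta)=0$ once the match breaks; a one-step computation shows $\scrE[\phi(\omega_1^{m+N+1})\mid\omega_1^{m+N}]\leq\phi(\omega_1^{m+N})$, so induction gives $\scrE[\phi(\omega_1^{m+N})]\leq 1$ for all $N$, and Markov's inequality finishes. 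Your stopping-time decomposition is more explicit about the mechanism in the overlapping regime --- you isolate the forced $m$-periodicity of $s$ and the cyclic-shift structure of the free window, and these are precisely what make the two case bounds combine to $e^{-\delta\ell_0}\sum_{t\in E^m}e^{-\delta\pseud(t)}=e^{-\delta\ell_0}$. The paper's supermartingale encodes the same bookkeeping in a single potential and avoids any case split, at the cost of a definition of $\phi$ that looks unmotivated until one has done the verification. Both proofs are short; yours makes the combinatorics of the overlap visible, the paper's makes the inequality mechanical.
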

\begin{remark}
It is possible that $r > m$, so that $\omega_{n + 1}^{n + r}$ and $\omega_{n + m + 1}^{n + m + r}$ overlap. Thus a naive independence argument does not work.
\end{remark}
\begin{proof}[Proof of Lemma \ref{lemmaprobability}]
Without loss of generality suppose $n = 0$. For each $\eta\in E^{m + N}$ let
\[
\phi(\eta) := \begin{cases}
e^{\delta\ell_0} & \eta\in E_{0,m,\ell_0}\\
0 & \eta_1^N \neq \eta_{m + 1}^{m + N}\text{ and }\eta\notin E_{0,m,\ell_0}\\
e^{\delta\pseud(\eta_{m + 1}^{m + N})} & \text{otherwise}
\end{cases}.
\]Then for any $\eta\in E^m$, we have $\phi(\eta) = 1$, and for any $\eta\in E^{m + N}$, we have
\[
\scrE[\phi(\omega_1^{m + N + 1})\given \omega_1^{m + N} = \eta] \leq \phi(\eta).
\]
A simple induction therefore yields $\scrE[\phi(\omega_1^{m + N})] \leq 1$. The result therefore follows from Markov's inequality.
\end{proof}

\begin{theorem}[A Khinchin-type theorem for fractals]
\label{theoremkhinchin}
Suppose that $(u_a)_{a\in E}$ and $\psi$ are such that the hypotheses \textup{(i) - (ii)} of Lemma \ref{lemmaba} are satisfied. Also suppose that $\psi$ is nonincreasing.
\begin{enumerate}[i)]
\item If the series
\begin{equation}
\label{qdeltalnq}
\sum_{q = 1}^\infty \frac{\log(q)\psi(q)^\delta}{q}
\end{equation}
converges, then for $\mu$-almost every $x\in J$, $x$ is badly symbolically approximable with respect to $\psi$.
\item If the series
\begin{equation}
\label{qdeltalnqlnlnq}
\sum_{q = 1}^\infty \frac{\log(q)\psi(q)^\delta}{q\log(\psi(q))}
\end{equation}
diverges, then for $\mu$-almost every $x\in J$, $x$ is symbolically well approximable with respect to $\psi$.
\end{enumerate}
\end{theorem}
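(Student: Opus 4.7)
The plan is to translate the theorem via Lemma \ref{lemmaba} into a statement about repeats in the symbolic sequence $\omega$ where $x = \pi(\omega)$. Explicitly, $x$ is badly symbolically approximable with respect to $\psi$ if and only if there exists $K$ such that every repeat $\omega_{n+1}^{n+r} = \omega_{n+m+1}^{n+m+r}$ satisfies $\pseud(\omega_{n+1}^{n+r}) \leq K + \Psi(\pseud(\omega_1^{n+m}))$. Let $\bar\ell := \scrE[\log q_{\omega_1}] = \sum_a q_a^{-\delta}\log q_a$; by the strong law of large numbers, $\pseud(\omega_1^N) = N\bar\ell + o(N)$ almost surely, so on the SLLN event we may replace $\Psi(\pseud(\omega_1^{n+m}))$ by $\Psi((n+m)\bar\ell)$ up to lower-order fluctuations.

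For part (i) I would apply the convergence Borel--Cantelli lemma to the events
\[
B_{n,m}^K := \bigl\{\omega : \exists\, r \text{ with } \omega_{n+1}^{n+r} = \omega_{n+m+1}^{n+m+r} \text{ and } \pseud(\omega_{n+1}^{n+r}) > K + \Psi(\pseud(\omega_1^{n+m}))\bigr\}.
\]
Conditioning on $\omega_1^{n+m}$ and applying Lemma \ref{lemmaprobability} after a shift gives
\[
\prob\bigl(B_{n,m}^K \mid \omega_1^{n+m}\bigr) \leq e^{-\delta K}\psi\bigl(e^{\pseud(\omega_1^{n+m})}\bigr)^\delta,
\]
which on the SLLN event is comparable, using the slow variation of $\psi$, to $e^{-\delta K}\psi(e^{(n+m)\bar\ell})^\delta$. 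Summing, noting that there are $k - 1$ pairs with $n + m = k$, and substituting $q = e^{k\bar\ell}$:
\[
\sum_{n,m}\prob(B_{n,m}^K) \asymp \sum_k k\,\psi(e^{k\bar\ell})^\delta \asymp \sum_q \frac{\log(q)\,\psi(q)^\delta}{q}.
\]
Convergence of (\ref{qdeltalnq}) and Borel--Cantelli then yield (i).

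Part (ii) is the harder direction. The main obstacle is that $\{B_{n,m}^K\}$ is a heavily correlated family---distinct events depend on overlapping stretches of $\omega$---so the divergent Borel--Cantelli lemma cannot be applied directly. My strategy would be to extract a quasi-independent subfamily scale by scale. Fix $K$ large, and for each $k$ pick a single target length $r_k \asymp -\log\psi(e^{k\bar\ell})/\bar\ell$ chosen so that $r_k\bar\ell \approx \Psi(k\bar\ell) + K + 1$. Partition the initial segment $\omega_1^{Ck}$, for a suitable constant $C$, into roughly $k/r_k$ disjoint pairs of length-$r_k$ probe windows, and let $A_{k,i}$ be the event that the two windows in the $i$-th pair carry identical words. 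The events $\{A_{k,i}\}_i$ are mutually independent at fixed $k$ since they depend on disjoint coordinates of $\omega$, and each has probability $\gtrsim \psi(e^{k\bar\ell})^\delta$ by an entropy estimate restricted to words of pseudolength close to $r_k\bar\ell$. Hence the expected number of successes at scale $k$ is $\gtrsim (k/r_k)\,\psi(e^{k\bar\ell})^\delta$, and the substitution $q = e^{k\bar\ell}$ converts the sum over $k$ into the series (\ref{qdeltalnqlnlnq}) up to constants---the factor $1/r_k$ supplies precisely the $1/\log\psi(q)^{-1}$ appearing in the denominator. Divergence of (\ref{qdeltalnqlnlnq}) together with scale-wise independence and the second Borel--Cantelli lemma then yields some $A_{k,i}$ for infinitely many $k$, producing repeats that violate the badly-approximable condition for every $K$.

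The core technical obstacle is the matching lower bound $\prob(A_{k,i}) \gtrsim \psi(e^{k\bar\ell})^\delta$: this is a converse to Lemma \ref{lemmaprobability}, and would require restricting to $\eta$ with pseudolength in a narrow band around $r_k\bar\ell$ and combining large-deviation counts on that band with the elementary inequality $\sum_\eta p(\eta)^2 \geq \sum_{\pseud(\eta) \approx r\bar\ell} p(\eta)^2$. One must also ensure that $K$ may be taken through a countable unbounded sequence without losing the full-measure conclusion, and control the SLLN exceptional set uniformly in the above constructions.
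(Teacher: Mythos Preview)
Your treatment of part (i) is essentially the paper's argument, but the detour through the strong law of large numbers is unnecessary and slightly misleading.  Since every $\log q_a$ lies in a fixed interval $[\beta,\alpha]$, the pseudolength $\pseud(\omega_1^{n+m})$ is \emph{deterministically} comparable to $n+m$; combined with the slow-variation hypothesis this lets one pass from $\Psi(\pseud(\omega_1^{n+m}))$ to $\Psi(n+m)$ up to an additive constant without invoking any almost-sure event.  The paper simply sets $\ell_{n,m}=K+\Psi(n+m)$, applies Lemma~\ref{lemmaprobability} unconditionally, and sums.

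Part (ii) has a genuine gap.  Your events $A_{k,i}$ are independent in $i$ for each fixed $k$, but across scales they all live inside the initial segments $\omega_1^{Ck}$ and therefore depend on overlapping coordinates of $\omega$; the second Borel--Cantelli lemma does not apply to the family $\{A_{k,i}\}_{k,i}$, and ``scale-wise independence'' is not enough.  Nor can the argument be rescued scale by scale: the expected number of successes at scale $k$ is only $(k/r_k)\,\psi(e^{k\bar\ell})^{\delta}$, which for the critical function $\psi(q)=\log(q)^{-2/\delta}$ is of order $1/(k\log k)$ and tends to zero, so $\prob(\text{no success at scale }k)$ is bounded away from $0$.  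The divergence you obtain comes entirely from summing these small contributions over $k$, and that sum has no probabilistic meaning without cross-scale independence or a second-moment decorrelation estimate that you have not supplied.

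The paper fixes both problems simultaneously.  First, it places scale $t$ inside the window $\omega_{2^{2t}}^{2^{2t+2}-1}$; since these windows are disjoint as $t$ varies, the corresponding events $E_t$ are genuinely independent and one may write $\prob(\bigcap_t E_t)=\prod_t\prob(E_t)$.  Second, and this is the idea your construction is missing, within a single scale the paper does \emph{not} pair blocks off one-to-one.  Instead it conditions on the second half $\tau=\omega_{2^{2t+1}}^{2^{2t+2}-1}$ and, for each of the $N_t\asymp 2^{2t}/\ell_t$ consecutive blocks in the first half, asks whether that block equals \emph{any} of the $\asymp 2^{2t}$ substrings of $\tau$ lying in $E^{\ell_t}$.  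This boosts the per-block match probability by a factor of $2^{2t}$, giving
\[
\prob(E_t)\leq\exp\Bigl(-c\,\frac{2^{4t}}{\ell_t}\,e^{-\delta\ell_t}\Bigr),
\]
and after the geometric-to-arithmetic conversion the exponent sums to the series~\eqref{qdeltalnqlnlnq}.  In your scheme each block is tested against exactly one partner, which loses precisely this factor; the missing factor is what you then try to recover by summing over dependent scales.

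Incidentally, the ``core technical obstacle'' you flag is harmless: Jensen applied to the concave function $\log$ gives $\log\sum_a q_a^{-2\delta}\geq\sum_a q_a^{-\delta}\log q_a^{-\delta}=-\delta\bar\ell$, and raising to the $r_k$-th power yields $\prob(A_{k,i})=\bigl(\sum_a q_a^{-2\delta}\bigr)^{r_k}\geq e^{-\delta r_k\bar\ell}$ with no large-deviation input needed.  The paper avoids even this by using the uniform bound $e^{-\delta\pseud(\eta)}\geq e^{-\delta(\ell_t+\alpha)}$ for $\eta\in E^{\ell_t}$.
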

In particular, if $\psi(q) = \log(q)^{-(2/\delta + \varepsilon)}$, then case (i) holds, and if $\psi(q) = \log(q)^{-2/\delta}$, then case (ii) holds.

\begin{corollary}
\label{KhinchinCantor}
Let $C$ be the Cantor set and $\mu$ the Hausdorff measure in the Cantor's set dimension restricted to $C$. Then for $\mu$-almost every $x\in J$, $x$ is badly symbolically approximable with respect to $\psi(q) = \log(q)^{-(2/\delta + \varepsilon)}$ and $\mu$-almost every $x\in J$, $x$ is symbolically well approximable with respect to $\psi(q) = \log(q)^{-2/\delta}.$
\end{corollary}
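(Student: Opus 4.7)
The plan is to deduce Corollary \ref{KhinchinCantor} as a direct specialization of Theorem \ref{theoremkhinchin} to the standard ternary Cantor IFS, after checking that both candidate approximation functions satisfy the hypotheses of that theorem and then reducing the two summability conditions (\ref{qdeltalnq}) and (\ref{qdeltalnqlnlnq}) to classical Bertrand-type series.

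First I would verify the structural hypotheses. The Cantor set $C$ is the limit set of the IFS $u_0(x)=x/3$, $u_1(x)=x/3+2/3$, for which $p_0=p_1=1$ and $q_0=q_1=3$; this is a rational IFS with $p_a=\pm 1$ for all $a$, and it satisfies the strong separation condition (for instance on $[c,d]=[0,1]$), so the blanket assumptions imposed in Section \ref{sectionintrinsicdenominator} for Theorem \ref{theoremkhinchin} are in force. The Hausdorff dimension is $\delta = \log 2/\log 3$, and $\mu=\mathcal H^\delta\given_C$ normalized is the measure appearing in the corollary. For each of the two test functions $\psi(q)=\log(q)^{-s}$ (with $s=2/\delta+\varepsilon$ or $s=2/\delta$) one checks that $\psi$ is positive, nonincreasing for $q\geq 3$, bounded (since $\psi(q)\to 0$), and slowly varying, because $\psi(Kq)/\psi(q)=(\log(Kq)/\log(q))^{-s}\to 1$. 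Thus conditions (i)--(ii) of Lemma \ref{lemmaba}, and the monotonicity hypothesis of Theorem \ref{theoremkhinchin}, are met in both cases.

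Next I would evaluate the two relevant series. For $\psi(q)=\log(q)^{-(2/\delta+\varepsilon)}$ we have $\psi(q)^\delta = \log(q)^{-(2+\varepsilon\delta)}$, and therefore
\[
\sum_{q=2}^{\infty}\frac{\log(q)\,\psi(q)^\delta}{q} \;=\; \sum_{q=2}^{\infty}\frac{1}{q\,\log(q)^{1+\varepsilon\delta}},
\]
which converges by the integral test (via $u=\log q$). Part (i) of Theorem \ref{theoremkhinchin} then yields that $\mu$-almost every $x\in C$ is badly symbolically approximable with respect to this $\psi$, giving the first half of the corollary. For $\psi(q)=\log(q)^{-2/\delta}$ we have $\psi(q)^\delta=\log(q)^{-2}$ and $|\log(\psi(q))|=(2/\delta)\log\log(q)$, so (up to the constant $\delta/2$) the series (\ref{qdeltalnqlnlnq}) is
\[
\sum_{q=3}^{\infty}\frac{1}{q\,\log(q)\,\log\log(q)},
\]
which diverges by two applications of the integral test ($u=\log\log(q)$ eventually gives $\int du/u$). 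Part (ii) of Theorem \ref{theoremkhinchin} then yields that $\mu$-almost every $x\in C$ is symbolically well approximable with respect to this $\psi$, which is the second half of the corollary.

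There is essentially no new obstacle here beyond the bookkeeping of the two Bertrand-type sums and the verification of the slowly varying, boundedness, and monotonicity conditions; the only slightly delicate point is being careful that the divergence criterion in Theorem \ref{theoremkhinchin}(ii) is interpreted with $|\log\psi(q)|$ in the denominator when $\psi(q)<1$, which is clearly the intended reading since the convergence/divergence of the series is what governs the dichotomy.
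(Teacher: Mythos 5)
Your proposal is correct and follows the paper's intended route: the corollary is treated as an immediate specialization of Theorem \ref{theoremkhinchin}, and the remark directly following that theorem already asserts exactly the two cases you verify. Your series computations are right, and your observation that the divergence criterion (\ref{qdeltalnqlnlnq}) must be read with $|\log\psi(q)|$ (equivalently, with $\Psi(\log q)$ rather than $\log\psi(q)$) is a correct reading of what the paper intends — indeed, the last line of the paper's derivation of (\ref{qdeltalnqlnlnq}) carries a dropped sign, since $\Psi(\log q)=-\log\psi(q)$.
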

~
\begin{proof}[Proof of Theorem \ref{theoremkhinchin}]~
\begin{enumerate}[i)]
\item
Fix $K$ to be determined. For each $n,m\in\N$ let $\ell_{n,m} = K + \Psi(n + m)$. By Lemma \ref{lemmaprobability} we have
\begin{equation}
\label{easybound}
\prob\left(\bigcup_{n,m\in\N}E_{n,m,\ell_{n,m}}\right)\leq \sum_{n,m\in\N}e^{-\delta(K + \Psi(n + m))} = e^{-\delta K}\sum_{n\geq 2}(n - 1) e^{-\delta\Psi(n)}.
\end{equation}
If (\ref{qdeltalnq}) converges, then the series
\[
\sum_{n = 1}^\infty n e^{-\delta\Psi(n)}
\]
also converges. Thus for all $\varepsilon > 0$ there exists $K < \infty$ such that the right hand side of (\ref{easybound}) is at most $\varepsilon$. In particular, the probability that $\omega\in\bigcup_{n,m\in\N}E_{n,m,\ell_{n,m}}$ can be made arbitrarily small. By Lemma \ref{lemmaba}, this implies that if $x$ is $\mu$-random, then $x$ is badly intrinsically approximable with respect to $\psi$.
\item
Let $\alpha$ and $\beta$ be the maximum and minimum pseudolengths of a single letter, respectively.

Fix $K < \infty$. Choose a random $\omega\in E^\N$. Fix $t\in\N$. For each $N\in\N$, we denote by $s(N)$ the smallest integer such that
\[
\pseud(\omega_N^{s(N) - 1}) \geq \ell_t := K + \Psi(\alpha 2^{2t + 2}).
\]
We note that for each $N\in\N$, the string $\omega_N^{s(N) - 1}$ lies in the set
\[
E^{\ell_t} := \{\eta\in E^r: \pseud(\eta)\geq \ell_t\text{ but }\pseud(\eta_1^{r - 1}) < \ell_t\}.
\]
Consider the event
\begin{quote}
\begin{itemize}
\item[$E_t$:] For all $N_1,N_2$ distinct with $2^{2t} \leq N_i < s(N_i) \leq 2^{2t + 2}$ we have $\omega_{N_1}^{s(N_1) - 1} \neq \omega_{N_2}^{s(N_2) - 1}$.
\end{itemize}
\end{quote}
We note that if (\ref{badlyapproximable4}) holds, then $E_t$ must hold for all $t\in\N$, due to our choice of $\ell_t$. Furthermore, the event $E_t$ depends only on the string $\omega_{2^{2t}}^{2^{2t + 2} - 1}$, and therefore the events $(E_t)_t$ are independent. In what follows, we will prove an upper bound on $\prob(E_t)$.

We begin by dividing $\omega_{2^{2t}}^{2^{2t + 1} - 1}$ into a sequence of subwords $(\omega_{N_{t,i}}^{N_{t,i + 1} - 1})_i$ in the following manner: Let $N_{t,0} = 2^{2t}$, and if $N_{t,i}$ has been chosen, then let $N_{t,i + 1} = s(N_{t,i})$. The sequence $(\omega_{N_{t,i}}^{N_{t,i + 1} - 1})_i$ is independent and identically distributed with distribution $\prob(\omega_{N_{t,i}}^{N_{t,i + 1} - 1} = \eta) = e^{-\delta\pseud(\eta)}$.

Now for all $\eta\in E^{\ell_t}$, we have $\pseud(\eta)\leq \ell_t + \alpha$. Thus $\pseud(\omega_{2^{2t}}^{N_{t,i} - 1})\leq i(\ell_t + \alpha)$ for all $i$. Let
\[
N_t = \left\lfloor \frac{2^{2t}\beta}{\ell_t + \alpha}\right\rfloor.
\]
Then $\pseud(\omega_{2^{2t}}^{N_{t,N_t} - 1})\leq 2^{2t}\beta$, and so $N_{t,N_t} - 2^{2t}\leq 2^{2t}$ i.e. $N_{t,N_t} \leq 2^{2t + 1}$. It follows that the sequence $(\omega_{N_{t,i}}^{N_{t,i + 1} - 1})_{i = 0}^{N_t - 1}$ depends only on the string $\omega_{2^{2t}}^{2^{2t + 1} - 1}$.

Fix a string $\tau$ of length $2^{2t + 1}$. We will prove an upper bound on $E_t$ conditioned on the event $\omega_{2^{2t + 1}}^{2^{2t + 2} - 1} = \tau$, which will then yield the unconditional bound we desire.

If $\tau$ contains two identical substrings which are members of $E^{\ell_t}$, then the event $\omega_{2^{2t + 1}}^{2^{2t + 2} - 1} = \tau$ contradicts $E_t$, so that $\prob(E_t\given \omega_{2^{2t + 1}}^{2^{2t + 2} - 1} = \tau) = 0$.

Otherwise, for each $i = 0,\ldots,N_t - 1$, the probability of the event
\begin{quote}
\begin{itemize}
\item[$E_{t,i}$:] $\omega_{N_{t,i}}^{N_{t,i + 1}}$ is not equal to any substring of $\tau$
\end{itemize}
\end{quote}
is given by
\[
\prob(E_{t,i} \given \omega_{2^{2t + 1}}^{2^{2t + 2} - 1} = \tau) = 1 - \sum_{\substack{\eta\in E^{\ell_t} \\ \text{substring of }\tau}}e^{-\delta\pseud(\eta)}
\]
and is therefore bounded above by
\[
1 - (2^{2t + 1} - \ell_t - \alpha)e^{-\delta(\ell_t + \alpha)}.
\]
By independence, it follows that the probability that $E_{t,i}$ holds for all $i = 0,\ldots,N_t - 1$ is bounded above by
\begin{equation}
\label{severalbound}
\left(1 - (2^{2t + 1} - \ell_t - \alpha)e^{-\delta(\ell_t + \alpha)}\right)^{N_t}.
\end{equation}
On the other hand, if $E_t$ holds, it is evident that $E_{t,i}$ holds for all $i = 0,\ldots,N_t - 1$. Thus the probability of $E_t$ given $\omega_{2^{2t + 1}}^{2^{2t + 2} - 1} = \tau$ is bounded above by (\ref{severalbound}). Since this conclusion holds for all $\tau\in E^{2^{2t}}$, it follows that the unconditional probability of $E_t$ is bounded above by (\ref{severalbound}).

As noted above, if (\ref{badlyapproximable4}) holds for every repeat $\eta$, then $E_t$ holds for all $t$. Since the sequence $(E_t)_t$ is independent, we have
\begin{align*}
\prob\left(\bigcap_{t\in\N} E_t\right) &\leq \prod_{t\in\N}\left(1 - (2^{2t + 1} - \ell_t - \alpha)e^{-\delta(\ell_t + \alpha)}\right)^{N_t}\\
&\leq \prod_{t\in\N}\exp\left(-N_t(2^{2t + 1} - \ell_t - \alpha)e^{-\delta(\ell_t + \alpha)}\right)\\
&= \exp\left(-\sum_{t\in\N}N_t(2^{2t + 1} - \ell_t - \alpha)e^{-\delta(\ell_t + \alpha)}\right).
\end{align*}
In particular, if the sum
\begin{equation}
\label{ellt}
\sum_{t\in\N}N_t(2^{2t + 1} - \ell_t - \alpha)e^{-\delta(\ell_t + \alpha)} \asymp \sum_{t\in\N}\frac{2^{4t}}{\ell_t}e^{-\delta\ell_t}
\end{equation}
diverges, then the probability that (\ref{badlyapproximable4}) holds for every repeat $\eta$ is zero. Since the divergence of the sum will be shown to be independent of $K$, it follows that if the sum diverges, then $\mu$-almost every point $x$ is intrinsically well approximable with respect to $\psi$.

Write $\alpha \leq 2^{2r - 2}$ for some $r\in\N$. Then
\begin{align*}
\sum_{t\in\N}\frac{2^{4t}}{\ell_t} e^{-\delta\ell_t}
&\geq \sum_{t\in\N}\frac{2^{4t}}{K + \Psi(2^{2t + 2r})} e^{-\delta(K + \Psi(2^{2t + 2r}))}\\
&\geq \frac{1}{2^{4r + 2}}\sum_{t\geq r}2^{2t}\frac{2^{2t + 2}}{K + \Psi(2^{2t})} e^{-\delta(K + \Psi(2^{2t}))}\\
&\geq \frac{1}{3}\frac{1}{2^{4r + 2}}\sum_{t\geq r}\sum_{n = 2^{2t}}^{2^{2t + 2} - 1}\frac{n}{K + \Psi(n)} e^{-\delta(K + \Psi(n))}\\
&\asymp \sum_{n = 0}^\infty\frac{\lfloor e^{n + 1}\rfloor - \lfloor e^n\rfloor}{e^n}\frac{n + 1}{\Psi(n)} e^{-\delta\Psi(n)}\\
&\geq \sum_{n = 0}^\infty\sum_{q = \lfloor e^n\rfloor}^{\lfloor e^{n + 1}\rfloor - 1}\frac{\log(q)}{q\Psi(\log(q))} e^{-\delta\Psi(\log(q))}\\
&= \sum_{q = 1}^\infty \frac{\log(q)}{q\Psi(\log(q))} e^{-\delta\Psi(\log(q))}\\
&= \sum_{q = 1}^\infty \frac{\log(q)\psi(q)^\delta}{q\log(\psi(q))}
\end{align*}
so if (\ref{qdeltalnqlnlnq}) diverges then (\ref{ellt}) diverges as well.
\end{enumerate}
\end{proof}

\section{Optimality of the bound}
\label{optimality}
In this section, we will restrict ourselves to the case where $J$ is the Cantor set $C$.

We begin by recalling the following conjecture and proposition from \cite{BFR}:
\begin{conjecture}[{\cite[Conjecture 3.3]{BFR}}]
\label{conjectureBFR}
If
\[
S_n := \{p/q\in C:\gcd(p,q) = 1,\;3^{n - 1} \leq q < 3^n\}
\]
then for all $\varepsilon_1 > 0$ we have
\[
\#(S_n) = O(2^{n(1 + \varepsilon_1)}).
\]
\end{conjecture}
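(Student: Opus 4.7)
The natural strategy is to attack Conjecture \ref{conjectureBFR} via the two intermediate results introduced just above it: Theorem \ref{theoremSnK}, which bounds the contribution of small-period rationals, and Conjecture \ref{conjectureheuristic}, asserting that large-period rationals do not arise in $S_n$ for $n$ large. By Corollary \ref{corollaryoftheoremSnK}, it suffices to prove Conjecture \ref{conjectureheuristic}: that for any fixed $K > 2/\log(3/2)$, every $p/q \in C$ in lowest terms with $q$ sufficiently large satisfies $P(p/q) \leq K\log q$.

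The first half of the plan, namely Theorem \ref{theoremSnK}, rests on the following observations, which I would carry out carefully. A rational $p/q\in C$ in lowest terms, with $q = 3^a b$ and $\gcd(b,3) = 1$, has ternary expansion with pre-period $a$ and period $P = \mathrm{ord}_b(3)$, all of whose digits lie in $\{0,2\}$. The set of such fractions with fixed $(a,P)$ therefore has cardinality at most $2^{a+P}$, the number of free digit sequences in $\{0,2\}^{a+P}$. Imposing the condition $P \leq K\log q$ keeps $a + P$ of order $n$; summing $2^{a+P}$ over the admissible $(a,P)$, while using Ramanujan's bound $d(N) = N^{o(1)}$ to control the number of admissible denominators $b \mid 3^P - 1$ for each fixed period $P$, should yield $\#(S_n^{(K)}) = O(2^{n(1+\varepsilon_1)})$ as claimed, with constant depending on $K$ and $\varepsilon_1$.

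The main obstacle is the second half, Conjecture \ref{conjectureheuristic}. The heuristic that supports it is very compelling but seems out of reach: for a typical rational with period $P$, one expects the $P$ digits of the repeating block to behave like independent uniform ternary digits, so that the probability of all lying in $\{0,2\}$ is $(2/3)^P = e^{-P\log(3/2)}$. Summing $q \cdot e^{-K\log(3/2)\log q} = q^{1 - K\log(3/2)}$ over $q\leq 3^n$ yields a convergent series for $K > 2/\log(3/2)$, predicting only finitely many exceptions by a Borel--Cantelli-type argument. Making this heuristic rigorous is the central difficulty: the ternary digits of $p/q$ are far from independent, and controlling their joint distribution as $q$ varies over integers with anomalously large multiplicative order of $3$ appears to lie beyond present techniques. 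I would attempt to approach this via a refined sieve argument over primes $\ell$ for which $\mathrm{ord}_\ell(3)$ is close to $\ell - 1$, exploiting Artin-type conjectures on primitive roots, but I do not see how to bypass the independence assumption, and I suspect that any successful proof must introduce a genuinely new input from analytic number theory.
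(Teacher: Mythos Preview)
Your overall assessment matches the paper's exactly: Conjecture~\ref{conjectureBFR} is \emph{not} proved in the paper, and the paper's treatment is precisely the reduction you describe --- prove Theorem~\ref{theoremSnK} to handle small-period rationals, state Conjecture~\ref{conjectureheuristic} for the large-period ones, give the Borel--Cantelli heuristic (with the same threshold $K > 2/\log(3/2)$ and the same independence-of-digits model you wrote down), and concede that the heuristic cannot presently be made rigorous. So on the main point there is nothing to correct: you and the paper arrive at the same open status by the same route.

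There is, however, a genuine slip in your sketch of Theorem~\ref{theoremSnK}. The bound ``at most $2^{a+P}$ rationals for fixed $(a,P)$'' is correct, but summing it over admissible $(a,P)$ does \emph{not} give $O(2^{n(1+\varepsilon_1)})$: the constraint $P \le K\log q$ only forces $P \le K(\log 3)n$, so $a+P$ can be as large as $(1 + K\log 3)n$, and $2^{a+P}$ then swamps the target bound regardless of how you invoke the divisor estimate. The paper's proof avoids this by replacing the digit-count $2^{a+P}$ with the fractal pigeonhole bound: for \emph{any} $q < 3^n$, the number of $p$ with $p/q \in C$ is $O(2^n)$, uniformly in the period. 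It then uses Ramanujan's $\tau(N) = N^{o(1)}$ exactly as you suggest, but to bound the number of admissible \emph{denominators} $q$ with period $m$ by $n\,\tau(3^m-1)$; multiplying by $O(2^n)$ and summing over $m \le K(\log 3)n$ gives $n\,2^n\,3^{K(\log 3)n\varepsilon_2} = O(2^{n(1+\varepsilon_1)})$ for $\varepsilon_2$ small. So the missing ingredient in your sketch is the pigeonhole step, not the divisor bound.
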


\begin{proposition*}[{\cite[Corollary 3.4]{BFR}}]
Conjecture \ref{conjectureBFR} implies that $\mu(\VWA_C) = 0$, where
\[
\VWA_C := \{x\in C:\exists \varepsilon > 0\;\;\exists^\infty p/q\in C \;\; |x - p/q| \leq q^{-(1 + \varepsilon)}\}.
\]
\end{proposition*}
As mentioned in the Introduction, we cannot prove Conjecture \ref{conjectureBFR} at this time, but we will reduce it to a simpler conjecture which a heuristic argument suggests is true.

\begin{definition}
Suppose that $p/q$ is a rational number. The \emph{period} of $p/q$ is the period of the ternary expansion of $p/q$, and will be denoted $P(p/q)$.
\end{definition}

\begin{theorem}
\label{theoremSnK}
For every $K < \infty$, if
\[
S_n^{(K)} := \{p/q\in C:\gcd(p,q) = 1,\;3^{n - 1} \leq q < 3^n,\textup{ and }P(p/q) \leq K \log(q)\}
\]
then for all $\varepsilon_1 > 0$ we have
\[
\#(S_n^{(K)}) = O(2^{n(1 + \varepsilon_1)}).
\]
\end{theorem}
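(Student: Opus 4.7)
The strategy is to count admissible denominators $q$ separately from the number of admissible numerators for each fixed $q$. Given $p/q\in S_n^{(K)}$, write $q=3^a q'$ with $\gcd(q',3)=1$; then the ternary expansion of $p/q$ has preperiod exactly $a$ and period exactly $P=\mathrm{ord}_{q'}(3)$, so the hypothesis $P(p/q)\leq K\log q$ forces $q'\mid 3^P-1$ for some positive integer $P\leq Kn\log 3$.

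For the denominator count, I would invoke the classical divisor bound $\tau(N)=O_\varepsilon(N^\varepsilon)$ (the sharp form being exactly the content of Ramanujan's theorem cited in the introduction) on $N=3^P-1$ and sum over all allowed $P$:
\[
\#\{\text{admissible }q'\}\;\leq\;\sum_{P=1}^{\lfloor Kn\log 3\rfloor}\tau(3^P-1)\;=\;O_\varepsilon\!\left(3^{\varepsilon Kn\log 3}\right).
\]
Choosing $\varepsilon$ small enough in terms of $K$ and $\varepsilon_1$ (concretely, $\varepsilon<\varepsilon_1\log 2/(K(\log 3)^2)$ suffices), the right-hand side becomes $O(2^{n\varepsilon_1})$. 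Since each $q'$ admits at most one value of $a$ for which $q=3^a q'$ lies in $[3^{n-1},3^n)$, the total number of denominators appearing in $S_n^{(K)}$ is also $O(2^{n\varepsilon_1})$.

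For the per-denominator count, I would prove the elementary pigeonhole bound
\[
\#\{k\in\{0,\ldots,q-1\}:k/q\in C\}\;\leq\;2^n
\]
valid whenever $q<3^n$. The reason is that $C$ is contained in the level-$n$ Cantor approximation, which is a disjoint union of $2^n$ closed intervals of length $3^{-n}$, while the lattice $\{k/q\}$ has spacing $1/q>3^{-n}$, so each such interval meets at most one lattice point. Since the set of $p/q\in C$ with $\gcd(p,q)=1$ is contained in the set above, the same bound holds for it.

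Combining the two estimates yields
\[
\#(S_n^{(K)})\;\leq\;2^n\cdot O(2^{n\varepsilon_1})\;=\;O(2^{n(1+\varepsilon_1)}),
\]
as required. The main obstacle is calibrating the divisor bound so that the subexponential factor in $P$ survives summation over all $P\leq Kn\log 3$ (which is linear in $n$) and still collapses into a factor of the form $2^{n\varepsilon_1}$; this is what makes the Ramanujan-strength bound essential rather than a weaker one like $\tau(N)\ll(\log N)^C$. The pigeonhole step is elementary and does not use periodicity at all; the periodicity hypothesis serves only to restrict the list of denominators which can occur.
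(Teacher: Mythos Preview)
Your proof is correct and follows essentially the same strategy as the paper's: decompose $q=3^a q'$ with $\gcd(q',3)=1$, observe that $q'\mid 3^P-1$ for some $P\leq Kn\log 3$, bound the number of admissible $q'$ via the divisor estimate $\tau(N)=O_\varepsilon(N^\varepsilon)$, and bound the numerators per denominator by a pigeonhole argument giving $O(2^n)$. Your version is in fact marginally tighter, since you note that $a$ is uniquely determined by $q'$ and the window $[3^{n-1},3^n)$, whereas the paper allows all $0\leq r<n$ and absorbs the resulting harmless extra factor of $n$.
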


We postpone the proof of Theorem \ref{theoremSnK} to the end of this section and proceed to state the following immediate corollary:

\begin{corollary}
\label{corollaryoftheoremSnK}
The following conjecture implies Conjecture \ref{conjectureBFR}, and thus that $\mu(\VWA_C) = 0$:
\vspace{-0.27 in}
\begin{quote}
\begin{conjecture}
\label{conjectureSnK}
There exists $K < \infty$ such that
\[
\#(S_n\butnot S_n^{(K)}) = O(2^{n(1 + \varepsilon_1)}).
\]
\end{conjecture}
\end{quote}
\end{corollary}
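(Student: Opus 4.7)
The plan is essentially bookkeeping: the corollary is designed to be an immediate consequence of Theorem \ref{theoremSnK} combined with the hypothesized Conjecture \ref{conjectureSnK}, together with the already-quoted \cite[Corollary 3.4]{BFR}. First I would write down the trivial disjoint decomposition
\[
S_n = S_n^{(K)} \sqcup (S_n \setminus S_n^{(K)}),
\]
which yields the obvious inequality $\#(S_n) \leq \#(S_n^{(K)}) + \#(S_n \setminus S_n^{(K)})$.

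Next, fix an arbitrary $\varepsilon_1 > 0$, and take the constant $K$ supplied by Conjecture \ref{conjectureSnK}. For this particular $K$, Theorem \ref{theoremSnK} (which has already been proven using Ramanujan's bound on the divisor function) gives $\#(S_n^{(K)}) = O(2^{n(1+\varepsilon_1)})$, while Conjecture \ref{conjectureSnK} gives $\#(S_n \setminus S_n^{(K)}) = O(2^{n(1+\varepsilon_1)})$ directly. Summing these two estimates yields $\#(S_n) = O(2^{n(1+\varepsilon_1)})$, which is precisely the statement of Conjecture \ref{conjectureBFR}.

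Finally, to pass from Conjecture \ref{conjectureBFR} to $\mu(\VWA_C)=0$ there is nothing new to prove: this implication is exactly the content of \cite[Corollary 3.4]{BFR}, recalled as the Proposition immediately before the statement of Theorem \ref{theoremSnK}. One simply invokes it. There is no genuine obstacle here; the entire point of introducing the period-restricted set $S_n^{(K)}$ is to absorb the combinatorial difficulty into Theorem \ref{theoremSnK}, so that the corollary becomes a one-line addition plus a citation.
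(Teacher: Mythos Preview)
Your argument is correct and is exactly the immediate deduction the paper intends: the paper states the result as an ``immediate corollary'' of Theorem \ref{theoremSnK} without giving a separate proof, and your decomposition $S_n = S_n^{(K)} \cup (S_n\setminus S_n^{(K)})$ together with the citation of \cite[Corollary 3.4]{BFR} is precisely the one-line verification the authors have in mind.
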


We will offer a heuristic argument in support of Conjecture \ref{conjectureSnK}. This argument will in fact support the following much stronger conjecture:
\begin{conjecture}
\label{conjectureheuristic}
For all $K > 2/\log(3/2)$, we have $S_n^{(K)} = S_n$ for all $n$ sufficiently large. In particular
\[
\#(S_n\butnot S_n^{(K)}) = o(1).
\]
\end{conjecture}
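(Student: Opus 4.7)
The plan is to support Conjecture \ref{conjectureheuristic} by a probabilistic heuristic that models the ternary digits of $p/q$ (for $p$ coprime to $q$) as independent and uniformly distributed random variables in $\{0,1,2\}$. Under this model, if $p/q$ has period $P = P(p/q)$, then the event $p/q \in C$ requires each of the $P$ digits in the repeating block to lie in $\{0,2\}$, an event of heuristic probability $(2/3)^P$.

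Next I would estimate the expected number of rationals $p/q \in C$ with $\gcd(p,q) = 1$, $3^{n-1} \le q < 3^n$, and $P(p/q) > K \log q$. The outer sum has fewer than $3^n$ values of $q$; for each such $q$ there are at most $\phi(q) \le q < 3^n$ reduced residues $p$; and by hypothesis $P(p/q) > K(n-1)\log 3$ throughout. The heuristic expectation is therefore bounded by
\[
E_n \;:=\; 3^{2n}\,(2/3)^{K(n-1)\log 3} \;=\; \exp\!\bigl(n \log 3\,(2 - K\log(3/2)) + O(1)\bigr).
\]
For $K > 2/\log(3/2)$ the exponent is strictly negative, so $E_n$ decays geometrically and $\sum_n E_n < \infty$. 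Applying heuristic Borel--Cantelli (equivalently, Markov's inequality to $\sum_n (\text{number of exceptions at scale } n)$) then predicts that only finitely many scales carry any exceptional $p/q$, which is precisely the statement of Conjecture \ref{conjectureheuristic}.

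The principal obstacle, and the reason this remains conjectural, is that the ternary digits of $p/q$ are deterministic functions of $(p,q)$, so the independence posited above is not actually available. A genuine proof would require a uniform decorrelation or equidistribution statement for the digits of $p/q$ as $(p,q)$ varies, effective precisely on the set of denominators whose multiplicative order of $3$ is large, i.e.\ on the complement of the set to which Theorem \ref{theoremSnK} already applies. The critical constant $2/\log(3/2)$ is forced by the factor of $2$ coming from the pessimistic bound $\sum_{q \le Q} \phi(q) \le Q^2$; any genuine proof would either have to preserve this factor faithfully or else exploit a sharper arithmetic input on the joint behavior of $\operatorname{ord}_q(3)$ and the reduced residues modulo $q$.
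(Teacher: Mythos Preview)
Your heuristic is correct and is essentially the same argument the paper gives: model the ternary digits in the repeating block of $p/q$ as independent uniform, so that $\prob(p/q\in C \mid P(p/q)=m)=(2/3)^m$, and then sum the resulting bound over all reduced $p/q$ to obtain a convergent series precisely when $K>2/\log(3/2)$, after which Borel--Cantelli (equivalently, the paper's ``probability of an exception with $q\ge Q$ tends to $0$'') finishes. The only cosmetic difference is that the paper sums directly over $q$ rather than binning into dyadic/triadic scales $n$, and your closing paragraph on the obstacle matches the paper's own remark that the independence assumption is the weak point.
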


\begin{proof}[Heuristic argument for Conjecture \ref{conjectureheuristic}]
It is easily verified that Conjecture \ref{conjectureheuristic} is equivalent to the inequality
\begin{equation}
\label{329}
\limsup_{\substack{p,q \\ p/q\in C \\ q\rightarrow\infty}}\frac{P(p/q)}{\log(q)} \leq \frac{2}{\log(3/2)}.
\end{equation}
Our method is to estimate reality using a probabilistic model, and then show that (\ref{329}) holds with probability one.

We will not specify our model exactly, but we will assume that it has the following property:
\begin{quote}
For each $p/q\in\Q$, the digits of $p/q$ are independent and identically distributed until they start repeating.
\end{quote}
We do not assume any independence of the digits of $p/q$ from the digits of any other rational, nor any estimate of the distribution of the periods.

Based on this assumption, if $p/q\in[0,1]$ is fixed then the probability that $p/q\in C$ given that $P(p/q) = m$ is $(2/3)^m$. It follows from standard probability theory that
\[
\prob(p/q\in C \text{ and }P(p/q)\geq m) \leq (2/3)^m.
\]
Fix $\varepsilon > 0$. We have
\[
\prob(p/q\in C \text{ and }P(p/q)\geq (2 + \varepsilon)\log_{3/2}(q)) \leq q^{-(2 + \varepsilon)}.
\]
For each $Q$, the probability that there exist $p,q$ with
\begin{align*}
q &\geq Q\\
p/q &\in C\\
P(p/q) &\geq (2 + \varepsilon)\log_{3/2}(q)
\end{align*}
is at most
\[
\sum_{\substack{p/q\in[0,1] \\ q\geq Q}}q^{-(2 + \varepsilon)} = \sum_{q\geq Q}q^{-(1 + \varepsilon)} \tendsto Q 0.
\]
Thus with probability one, there exists $Q$ such that for all $p,q$ with $q\geq Q$ and $p/q\in C$, we have $P(p/q)\leq \log_{3/2}(q)(2 + \varepsilon)$. Rearranging yields (\ref{329}).
\end{proof}
\begin{remark}
The weakest part of this heuristic argument is the fact that the randomness is not open to a statistical interpretation. We are not saying ``If you pick a rational at random, this should happen'' but rather ``If you pick a random mathematical universe, then this should happen'' (which of course makes no sense as a logical statement). In fact, the former statement would be insufficient to support Conjecture \ref{conjectureheuristic} (or even Conjecture \ref{conjectureSnK}), since we need that the size of the set of exceptions in proportion to the set of all rationals in a given range tends to zero exponentially fast.
\end{remark}
\begin{proof}[Proof of Theorem \ref{theoremSnK}]
Let $K_4 = K\log(3)$. We have
\[
S_n^{(K)} \subset \bigcup_{m = 1}^{K_4 n}\{p/q\in C:\gcd(p,q) = 1,\;q < 3^n,\text{ and }P(p/q) = m\}.
\]
For each $q < 3^n$, we have
\[
\#\{p = 0,\ldots,q: p/q\in C\} \leq K_5 2^n
\]
by the fractal pigeonhole principle. Thus
\[
\#(S_n^{(K)}) \leq K_5 2^n\sum_{m = 1}^{K_4 n}\#\{q < 3^n: \exists p \;\; \gcd(p,q) = 1,\;P(p/q) = m\}.
\]
Fix $q\in\N$, and suppose that there exists $p$ with $\gcd(p,q) = 1$ and $P(p/q) = m$. Write $q = 3^r \w{q}$ where $3$ does not divide $\w{q}$. Then $\gcd(p,\w{q}) = 1$ and $P(p,\w{q}) = m$. Furthermore the ternary expansion of $p/\w{q}$ is (immediately) periodic. A simple calculation shows that $p/\w{q} = i/(3^m - 1)$ for some $i = 0,\ldots,3^m - 1$. Since $p/\w{q}$ is in reduced form, this implies that $\w{q}$ divides $3^m - 1$. To summarize:
\begin{align*}
\#\{q < 3^n: \exists p \;\; \gcd(p,q) = 1, P(p/q) = m\} &\leq \#\{(r,\w{q}):0\leq r < n,\;\w{q}\text{ divides }3^m - 1\}\\
&= n \tau(3^m - 1),
\end{align*}
where $\tau$ is the number-of-divisors function.

The following result concerning the number-of-divisors function was proven by Ramanujan \cite{Ramanujan}:
\[
\limsup_{N\to\infty}\frac{\log(\tau(N))}{\log(N)/\log\log(N)} = \log(2).
\]
Thus for every $\varepsilon > 0$, we have
\[
\tau(N) \leq N^{(\log(2) + \varepsilon)/\log\log(N)}
\]
for all $N$ sufficiently large. In particular, if we fix $\varepsilon_2 > 0$ to be determined, then
\[
\tau(N) \leq N^{\varepsilon_2}
\]
for all $N$ sufficiently large. Let $K_{6,\varepsilon_2}$ be large enough so that
\[
\tau(N) \leq K_{6,\varepsilon_2} N^{\varepsilon_2}
\]
for all $N\in\N$.

Combining our several equations yields
\[
\#(S_n^{(K)}) \leq K_5 K_{6,\varepsilon_2} n 2^n\sum_{m = 1}^{K_4 n}(3^m - 1)^{\varepsilon_2} \asymp n 2^n 3^{n K_4 \varepsilon_2} \lesssim 2^{n(1 + \varepsilon_1)}
\]
if $\varepsilon_2$ is chosen small enough so that $3^{K_4 \varepsilon_2} < 2^{\varepsilon_1}$.

\end{proof}

\bibliographystyle{amsplain}

\bibliography{bibliography}

\end{document}